\newif\ifnotarxiv
\newtheorem{theorem}{Theorem}
\newtheorem{lemma}{Lemma}
\author{Kristian Debrabant and Anne Kv{\ae}rn{\o} and Nicky Cordua Mattsson
}
\newcommand{\subclass}[1]{\subjclass{#1}}
\address[Kristian Debrabant]{Department of Mathematics and Computer Science, University of Southern Denmark, 5230 Odense M, Denmark}
\email{debrabant@imada.sdu.dk}
\address[Anne Kv{\ae}rn{\o}]{Department of Mathematical Sciences, Norwegian University of Science and Technology, 7491 Trondheim, Norway}
\email{anne.kvarno@math.ntnu.no}
\address[Nicky Cordua Mattsson]{Department of Mathematics and Computer Science, University of Southern Denmark, 5230 Odense M, Denmark}
\email{mattsson@imada.sdu.dk}
\newif\ifUpdateFigures\UpdateFiguresfalse
\newtheorem{assumption}{Assumption}
\pgfplotsset{compat=newest,every axis/.append style={legend style={draw=none,fill=none,font=\tiny}}}
\else\renewcommand{\pgfplotstableread}[3][]{}\fi
\else\tikzexternalize[up to date check=simple]\fi
\Crefname{assumption}{Assumption}{Assumptions}
\crefname{inequality}{inequality}{inequalities}
\crefname{item}{}{}
\crefname{equation}{}{}
\newcommand{\Ih}{I^h}
\newcommand{\gh}{\hat{g}}
\newcommand{\gb}{\bar{g}}
\newcommand{\gt}{\tilde{g}}
\newcommand{\R}{\mathbb{R}}
\newcommand{\N}{\mathbb{N}}
\newcommand{\C}{\mathbb{C}}
\newcommand\gO{\mathcal{O}}
\newcommand{\dmath}{\mathrm{d}}
\newcommand{\dt}{\dmath t}
\newcommand{\ds}{\dmath s}
\newcommand{\dW}{\dmath W}
\newcommand{\dX}{\dmath X}
\newcommand{\dV}{\dmath V}
\newcommand{\Id}{\mathbb{I}_d}
\newcommand{\Ad}{\mathfrak{A}}
\newcommand{\Bd}{\mathfrak{B}}
\newcommand{\Smet}{\mathfrak{S}}
\newcommand{\gs}{\gamma^\star}
\newcommand{\ito}{It\^{o}}
\DeclareMathOperator{\E}{E}
\newcommand{\hs}{\varphi}
\renewcommand{\todo}[2][]{\tikzexternaldisable \@todo[#1]{#2}\tikzexternalenable}
\begin{document}

\title{Runge--Kutta Lawson schemes for stochastic differential equations}

\ifnotarxiv\maketitle\fi

\begin{abstract}
In this paper, we present a framework to construct general stochastic Runge--Kutta Lawson schemes.
We prove that the schemes inherit the consistency and convergence properties of the underlying
Runge--Kutta scheme, and confirm this in some numerical experiments. We {also investigate} the stability properties of the methods and show for some examples, that the new schemes have improved stability properties compared to the underlying schemes.
\keywords{ systems of stochastic differential equations \and stochastic Runge--Kutta \and stochastic Lawson \and mean-square stability.}
\subclass{ 60H35 \and 60H10 \and 65L20 \and 93E15}
\end{abstract}

\maketitle

\section{Introduction}
Stochastic differential equations {(SDEs)} are an essential tool in order to model and understand real-life systems under the influence of noise, see, e.g., \cite[Section 7.1-7.10]{kloeden99nso} for examples. We do, however, only know the exact solution to very few of these equations, thus will usually have to integrate the differential equations numerically. To efficiently do this, it is of interest to construct numerical schemes that can recreate essential dynamics of the exact solution.

Linear terms in the drift and the diffusion can often represent these essential dynamics. See e.g.\ recent works on linear stability analysis \cite{buckwar12nds,buckwar10tas,buckwar11acl} and highly oscillatory differential equations \cite{cohen12otn,debrabantXXlsf}. For the same reason, much work has also gone into treating these parts explicitly, see e.g.\  the work on local linearization techniques by \cite{biscay96llm,jimenez99sos,carbonell09wll}. In this paper we assume that the relevant linear terms have been made explicitly available, and thus consider SDEs of the form
\begin{equation}\label{equ:sde}
\dX(t)= \sum_{m=0}^{M} (A_m X(t) + g_m(t,X(t)){)}\star\dW_m(t),\quad X(t_0)=X_{0},
\end{equation}
where $W_m$ for $m=1,\ldots,M$ denote independent scalar Wiener processes, $W_0{(t)} = t$ denotes the time and the SDE is solved on the interval ${I} = [t_0,T]$. {Here, the stochastic integral can be interpreted as Itô integral with $\star\dW_m=\dW_m$, or as Stratonovich integral with $\star\dW_m=\circ\dW_m$.} We assume that SDE \labelcref{equ:sde} has a unique solution for $X_0\in \mathbb{R}^d$ and that all {$g_m$} have the appropriate regularity for this (depends on the interpretation of the integral). We also assume that the matrices $A_m \in \mathbb{R}^{d\times d}$, $m=0,\ldots,M$, are constant and {are chosen in connection with $g_m$} such that the following assumption {holds}:
\begin{assumption}[Commutativity]\label{ass:commute}
  \[ [A_l,A_k] =A_lA_k - A_kA_l = 0  \qquad \text{for all} \qquad l,k = 0,1,\dots,M. \]
\end{assumption}%

Exponential integrators have, especially in the deterministic case, been very efficient at solving some types of differential equations. In the more recent years, much work has gone into extending these results and schemes to {SDEs}. In particular, Erdogan and Lord \cite{erdogan19anc} construct an exponential Euler and an exponential Milstein scheme, including both the linear drift and diffusion, and numerically demonstrate that these schemes are more efficient on specific problems than their underlying schemes. They also show that for linear diffusion, the strong order of convergence of the exponential Euler scheme is $p=1$. We also mention the work by \cite{komori14ase,komori17wso}, who apply specific exponential schemes to a stiff system and construct an explicit weak second-order exponential scheme that proves to be A-stable for the linear test-equation suggested by \cite{higham00msa}.

In this paper, we construct exponential integrators including both the linear drift and diffusion using the entire class of stochastic Runge-Kutta (SRK) schemes and a stochastic extension of Lawson type schemes (also known as integrating factor methods) for both Stratonovich and \ito\ integrals. We prove that these stochastic Lawson (SL) schemes, under some conditions, inherit both the strong and weak order of convergence of the underlying scheme, and provide a general framework to construct higher-order exponential schemes.

In \cref{sec:Lawsons} we present a stochastic extension of the deterministic Lawson transformation \cite{lawson67grk} and derive then the general class of SRK Lawson schemes, providing also several examples. In \cref{sec:SRK}, we prove that these methods, under some conditions, converge both strongly and weakly of the same order as the underlying SRK schemes. These results are accompanied by numerical examples. In \cref{sec:MSstab} we provide some linear stability analyses for a selection of these schemes, showing that exponential SRK schemes may have improved stability properties. We also compare these methods to the drift implicit Platen scheme, where we show that the Platen Lawson scheme better catches the behaviour of the {reference} solution {for the considered examples}. These results are also verified by numerical simulations.

\section{Construction of {SRK} Lawson schemes} \label{sec:Lawsons}
In this section, we present the overall idea of Lawson type schemes; we then apply the idea to the class of {SRK} schemes and provide several examples.
\subsection{General construction}
This section aims at constructing a numerical scheme which solves linear SDEs of the form
\begin{equation}\label{equ:linSDE}
    \dX(t)=\sum_{m=0}^M A_m X(t)\star \dW_m(t), \quad X(t_0)=X_0
\end{equation}
exactly. Under \cref{ass:commute}, the exact solution of \eqref{equ:linSDE} can be written as \cite{arnold74sde,erdogan19anc}
\begin{equation}\label{equ:linSDEcommsol}
    X(t)=\exp\left[\left(A_0 - \gs \sum_{m=1}^{M} A^2_m \right)(t-t_0) + \sum_{m=1}^{M} A_m (W_m(t)-W_m(t_0))\right] X_0,
\end{equation}
with $\gs=\frac{1}{2}$ in the \ito\ case, $\gs=0$ in the Stratonovich case (see also \cref{lem:transformation}). This exact solution will be used to construct the exponential integrators, in accordance to the approach to construct deterministic Lawson schemes \cite{lawson67grk}.

Before we do so, we want to emphasize that \cref{ass:commute} is not a restriction on the SDE to be considered, but rather a restriction on how to pick the matrices $A_m$. For demonstration consider the SDE
\begin{equation}\label{eq:GenSDE}
\dX(t)=\left(\tilde{A}_0 X(t) + f_0(t,X(t))\right)\dt+ \left(\tilde{A}_1 X(t) + f_1(t,X(t))\right)\star\dW(t),
\end{equation}
where $\tilde{A}_0$ does not commute with $\tilde{A}_1$. It is always possible to find a splitting of $\tilde{A}_{{0}}$,
\begin{equation*}
    \tilde{A}_0 = A_0 + A_0^r,
\end{equation*}
such that $A_0$ commutes with $\tilde{A}_1$, and we can thus rewrite \cref{eq:GenSDE} in the form of \cref{equ:sde} by choosing
\begin{equation*}
    g_0(t,X(t)) = f_0(t,X(t)) + A_0^r X(t),\qquad g_1(t,X(t))=f_1(t,X(t)),\qquad A_1=\tilde{A}_1.
\end{equation*}
An obvious choice is $A_0=c\tilde{A}_1$ for some {scalar} $c$. Optimally $A_0$ should represent the properties of the SDE that we are interested in simulating exactly. Conversely, we can also put $A_1=c_1\tilde{A}_0$, which ensures that $A_1$ commutes with $\tilde{A}_0$.

Let a discretization ${\Ih} = \{t_0, t_1, \ldots, t_N\}$ with $t_0 < t_1 < \ldots < t_N =T$ of the time interval $I$ be given with $h_n = t_{n+1}-t_n$ for $n=0,1, \ldots, N-1$ denoting the step size. To construct numerical schemes that solve \cref{equ:linSDE} exactly, we will use the following \lcnamecref{lem:transformation}.
 \begin{lemma}\label{lem:transformation}
     Let $X$ be the solution of SDE \cref{equ:sde} and let \cref{ass:commute} be true. Then the locally transformed variable $V^n$ defined by
     \begin{equation}\label{equ:TransformedVar}
         V^n(t) = e^{-L^{n}(t)} X(t)
     \end{equation}
     with
     \begin{equation}\label{eq:Lndef}
         L^n(t) = \left(A_0-\gs\sum_{m=1}^M A_m^2\right)(t-t_n) + \sum_{m=1}^M A_m(W_m(t) - W_m(t_n))
     \end{equation}
     satisfies the SDE
     \begin{equation} \label{equ:TransformedSDE}
  \dV^n(t) =\sum_{m=0}^M \gh^n_m(W(t),V^n(t))\star\dW_m(t), \quad V^n(t_n)=X(t_n),
     \end{equation}
     where $W(t)=(W_m(t))_{m=0}^M$ and
     \begin{align*}
         \gh^n_m(W(t),x) :=& e^{-L^n(t)} \gt_m(t,e^{L^n(t)}x)
     \end{align*}
     with
     \begin{align*}
         \gt_m(t,x):= \begin{cases}
             g_0(t,x){-}2\gs \sum_{m=1}^M A_mg_m(t,x), & m=0, \\
            g_m(t,x), & m>0.
         \end{cases}
     \end{align*}
 \end{lemma}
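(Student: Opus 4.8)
The plan is to apply the matrix-valued \ito\ product rule to the factorization $V^n(t)=e^{-L^n(t)}X(t)$ from \cref{equ:TransformedVar}; the whole point of \cref{ass:commute} is that it lets one differentiate the matrix exponential exactly as in the scalar case. Writing $\Lambda:=A_0-\gs\sum_{m=1}^M A_m^2$, \cref{eq:Lndef} gives $\dmath L^n(t)=\Lambda\,\dt+\sum_{m=1}^M A_m\,\dW_m(t)$, and since all $A_m$ (hence $\Lambda$) commute, the matrices $L^n(s),L^n(t),\dmath L^n(t)$ commute with one another and with every $A_k$. Consequently the Fréchet derivatives of $M\mapsto e^{-M}$ collapse to $f'(L)[H]=-e^{-L}H$ and $f''(L)[H,K]=e^{-L}HK$, and \ito's formula gives
\[
  \dmath e^{-L^n(t)} = -e^{-L^n(t)}\,\dmath L^n(t) + \tfrac12 e^{-L^n(t)}\bigl(\dmath L^n(t)\bigr)^2,
\]
with $\bigl(\dmath L^n(t)\bigr)^2=\sum_{m=1}^M A_m^2\,\dt$ in the \ito\ case and the second term absent in the Stratonovich case (ordinary chain rule). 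In particular $e^{-L^n(t_n)}=\Id$, which will yield the stated initial condition.

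Next I would feed this into the product rule $\dV^n=(\dmath e^{-L^n})X+e^{-L^n}\,\dX+\dmath[e^{-L^n},X]$ together with \cref{equ:sde}, $\dX=\sum_{m=0}^M(A_mX+g_m)\star\dW_m$. The cross-variation term is $\dmath[e^{-L^n},X]=-e^{-L^n}\sum_{m=1}^M A_m(A_mX+g_m)\,\dt$ in the \ito\ case (independence of the $W_m$ kills the off-diagonal contributions) and vanishes in the Stratonovich case. Expanding everything, the $\dt$-coefficient is $e^{-L^n}\bigl[(-A_0+(\gs+\tfrac12)\sum A_m^2)X + A_0X + g_0 - \sum A_m^2 X - \sum A_m g_m\bigr]$ in the \ito\ case; the $A_0X$ terms cancel, and since $\gs=\tfrac12$ the $\sum A_m^2 X$ terms cancel as well, leaving $e^{-L^n}(g_0-2\gs\sum_{m\ge1}A_m g_m)$. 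For each $m\ge1$ the $\dW_m$-coefficient is $e^{-L^n}(-A_mX+A_mX+g_m)=e^{-L^n}g_m$. The Stratonovich computation is identical with $\gs=0$ and no second-order or cross-variation terms. Substituting $X(t)=e^{L^n(t)}V^n(t)$ into the arguments of the $g_m$ then rewrites these coefficients as exactly $\gh^n_m(W(t),V^n(t))$ with the stated $\gt_m$, which is \cref{equ:TransformedSDE}.

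I expect the only genuine subtlety to be the first step: justifying that \ito's formula applied to the analytic matrix function $M\mapsto e^{-M}$ reduces to the scalar-looking expression above. This is exactly where \cref{ass:commute} enters — without it both the Hessian term and the cross-variation term would carry matrix commutators, and the cancellations of $A_0X$ and $\sum A_m^2 X$ in the last step would fail. The remainder is bookkeeping: keeping the $W_0(t)=t$, $m=0$ convention straight and running the \ito\ ($\gs=\tfrac12$) and Stratonovich ($\gs=0$) cases in parallel.
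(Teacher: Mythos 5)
Your proposal is correct and follows essentially the same route as the paper: both apply \ito's formula to $V^n(t)=e^{-L^n(t)}X(t)$, using \cref{ass:commute} to reduce the differentiation of the matrix exponential to the scalar case, and then collect the drift, diffusion, second-order and cross-variation contributions so that the $A_0X$ and $\sum_m A_m^2X$ terms cancel. The only difference is organizational — you split the computation into $\dmath e^{-L^n}$, the product rule, and the covariation bracket, whereas the paper writes the full \ito\ expansion in a single display — and your term-by-term bookkeeping (including the $\gs$-dependence and the initial condition $e^{-L^n(t_n)}=\Id$) matches the paper's.
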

 \begin{proof}
{Using \cref{ass:commute} when applying \ito's lemma  to the transformed variable $V^n$ defined in \cref{equ:TransformedVar} yields}{
\begin{multline*}
\dV^n(t) =-\left(A_0-\gs\sum_{m=1}^M A_m^2\right)e^{-L^{n}(t)} X(t)\dt-\sum_{m=1}^M A_me^{-L^{n}(t)} X(t)\dW_m\\
+\gs\sum_{m=1}^M A_m^2e^{-L^{n}(t)} X(t)\dt+e^{-L^{n}(t)}\dX(t)\\+2\gs\sum_{m=1}^M{\Big{(}}-A_me^{-L^{n}(t)}{\Big{)}}{\Big(}A_m X(t) + g_m(t,X(t)){\Big)}\dt.
\end{multline*}
Inserting \cref{equ:sde} we obtain (again using \cref{ass:commute})
\begin{align*}
\dV^n(t) &={-}2\gs e^{-L^{n}(t)}\sum_{m=1}^M A_mg_m(t,X(t))\dt+e^{-L^{n}(t)}\sum_{m=0}^{M} g_m(t,X(t)) \star\dW_m(t)
\end{align*}
which together with \cref{equ:TransformedVar} yields the assertion.}
 \end{proof}
{Note that by \cref{lem:transformation}, every solution of \cref{equ:sde} induces a solution of \cref{equ:TransformedSDE}. Analogously one can also show that every solution $V^n$ of \cref{equ:TransformedSDE} induces a solution $X(t)=e^{L^n(t)}V^n(t)$ of \cref{equ:sde}. As \cref{equ:sde} is assumed to have a unique solution, this holds thus also for  \cref{equ:TransformedSDE}.}

Note {further} that for linear SDEs \eqref{equ:linSDE}, the right hand side of \eqref{equ:TransformedSDE} will vanish. Denoting by $Y_n$ the discrete-time approximation to $X(t_n)$, we now define the family of one-step SL schemes by \cref{alg:LocalLawson}.

\begin{algorithm}[ht]
\caption{Stochastic Lawson scheme}\label{alg:LocalLawson}
\begin{enumerate}[(a)]
  \item Start with $Y_0=x_0$.
  \item For $n=0,\dots,N-1$
  \begin{enumerate}[(i)]
    \item \label{algparta}Apply one step of a one-step method to get an approximation $V^n_{n+1}$ of the exact solution $V^n(t_{n+1})$ of
\begin{equation}\label{equ:TransformedSDELocal}
\dV^n(t) = \sum_{m=0}^M \gh^n_m(W(t),V^n(t))\star\dW_m(t), \quad V^n(t_n)=Y_n
\end{equation}
    at $t_{n+1}$.
    \item Define
\begin{equation*}
    Y_{n+1}=e^{\Delta L^n}V^n_{n+1}
\end{equation*}
where $\Delta L^n{=\left(A_0-\gs\sum_{m=1}^M A_m^2\right)(t_{n+1}-t_n) + \sum_{m=1}^M A_m\Delta W_m^n\in\R^{d\times d}}$ {with
$\Delta W_m^n$ being the approximation of $W_m(t_{n+1}) - W_m(t_n)$ used by the one-step method applied in step (i).}
  \end{enumerate}
\end{enumerate}
\end{algorithm}
We note that if $L^n(t)$ is a stochastic process or if we use variable step-sizes, $e^{L^n(t)}$ has
to be recalculated at every step, which might be expensive for large dimensions $d$. One way to
avoid doing this is to pick $A_m = 0$ for $m>0$ and to use constant step sizes, $h_n=h$; then one
only has to calculate the exponentials once. We denote such schemes as drift SL (DSL) schemes. In
contrast, a full stochastic Lawson (FSL) scheme is a scheme with at least one nonzero linear
diffusion term present in the operator ${L^n(t)}$. The underlying numerical scheme is restored by
setting all the linear parts to 0. This is typically the case if the linear part is a result of a
spatial discretization of a diffusion term.
{Compared to the original schemes the computational overhead of the DSL schemes is negligible, as
  the matrix exponential is deterministic and is calculated only once. For the FSL schemes, this is not the
  case. Thus the applicability of the FSL methods depends on how efficient the matrix exponentials can
be calculated, weighted with the advantages of improved performance. See \cite{cohen12otn,debrabantXXlsf} for some successful examples.}

We now apply this general idea to the class of {SRK} schemes.

\subsection{{SRK} Lawson schemes}
To apply an SRK method to \cref{equ:TransformedSDE} we first have to transform \cref{equ:TransformedSDE} to an autonomous system, i.e.\ add $M+1$ equations in order to obtain an autonomous SDE in terms of $\bar{V}^n(t) = (W(t)^\top,{V^n}(t)^\top)^{\top}$,
\begin{equation}\label{equ:autonomousSDE}
    \dmath \bar{V}^n(t) = \sum_{m=0}^M \gb^n_m(\bar{V}^n(t)) \star \dW_m(t), \quad \gb^n_m = (\delta_{0,m},\ldots,\delta_{M,m},{\gh^{n\top}_m})^{\top}, \mbox{ } \delta_{i,j} = \begin{cases}
        1, & i=j, \\
        0, & i\neq j.
    \end{cases}
\end{equation}
Using the same notation as in \cite{burrage00oco}, an $s$-stage SRK method applied to \cref{equ:autonomousSDE} is given by
\begin{equation} \label{equ:SRK}
    \begin{aligned}
        \bar{H}_i &= \bar{V}^n_n + \sum_{j=1}^{s} \sum_{m=0}^M Z_{ij}^{m,n} \gb^n_m(\bar{H}_j), \\
        \bar{V}^n_{n+1} &= \bar{V}^n_n + \sum_{i=1}^{s} \sum_{m=0}^M z_i^{m,n} \gb^n_m(\bar{H}_i)
    \end{aligned}
\end{equation}
with suitable random variables $Z_{ij}^{m{,n}}$ and $z_i^{m{,n}}$.
Letting
\[c^{n,i}_m = \sum_{j=1}^s Z_{ij}^{m,n},\qquad c^n_m = \sum_{i=1}^s z_i^{m,n}\]
it follows for the approximations $V^n_{n+1}$ to $V^n(t_{n+1})$ and $W^n$ to $W(t_n)$ and the corresponding stage values that
\begin{equation}\label{eq:sRKVorig}
\begin{aligned}
    H_i &= V^n_n + \sum_{j=1}^s \sum_{m=0}^M Z_{ij}^{m,n} \gh^n_m(W^n+c^{n,j}, H_j), \\
    W_{m}^{n+1}&=W_{m}^n+c^n_m,\qquad {m=0,\dots,M,} \\
    V^n_{n+1} &= V^n_n + \sum_{i=1}^s \sum_{m=0}^M z_i^{m,n} \gh^n_m(W^n+c^{n,i}, H_i),
\end{aligned}
\end{equation}
where $W^n+ c^{n,i} = (W^n_m+c^{n,i}_m)_{m=0}^M$. Defining the discrete updates $\Delta W^n_m=W_{m}^{n+1}-W_{m}^n=c^n_m$,
\begin{equation*}
    \Delta L^n_i = \left(A_0 - \gs \sum_{m=1}^M A_m^{{2}}\right)c^{n,i}_{{0}} + \sum_{m=1}^M A_m c^{n,i}_m,
\end{equation*}
and
\begin{equation}\label{eq:DeltaLndef}
     \Delta L^n = \left(A_0 - \gs \sum_{m=1}^M A_m^{{2}}\right)\Delta W^n_0 + \sum_{m=1}^M A_m \Delta W^n_m
\end{equation}
and using the particular form of $\hat{g}$ and transforming back{,} we get the family of {SRK} Lawson schemes
\begin{equation}\label{equ:SRKL}
\begin{aligned}
    H_i &= Y_n + \sum_{j=1}^s \sum_{m=0}^M Z_{ij}^{m,n} e^{-\Delta L_j^{n}}\gt_m(t_n+c_0^{n,j},e^{\Delta L_j^n} H_j), \\
    V^n_{n+1} &= Y_n + \sum_{i=1}^s \sum_{m=0}^M z_i^{m,n} e^{-\Delta L_i^n}\gt_m(t_n+c^{n,i}_0,e^{\Delta L_i^n} H_i), \\
    Y_{n+1} &= e^{\Delta L^n}V^n_{n+1}.
\end{aligned}
\end{equation}
We will now look at some specific examples.

\subsection{Some examples of SRK Lawson schemes}\label{subsec:SRKLawsonExamples}
In the following{,} we will shortly discuss two SRK Lawson schemes for \ito\ SDEs, the Euler--Maruyama SL scheme and the Platen SL scheme, as well as the midpoint SL scheme for Stratonovich SDEs. In all cases{,} we assume that $\Delta W^n_0=h_n$ {and that for $m\geq1$,} $\Delta W^n_m$ is a suitable approximation to $W_m(t_{n+1})-W(t_n)$, {i.\,e., when we are interested in mean-square convergence of order $p$, then it needs to hold that $\E(\Delta W^n_m)=\gO(h_n^{p+1})$ and $\E[(\Delta W^n_m-(W_m(t_{n+1})-W(t_n)))^2]=\gO(h_n^{2p+1})$, while only the first $3$ moments of $\Delta W^n_m$ need to coincide with the ones of $W_m(t_{n+1})-W(t_n)$ if we are interested in weak convergence of order 1}.

\subsubsection{Euler--Maruyama SL scheme} The Euler--Maruyama scheme has the SRK coefficients $s=1$, $Z^{m,n}_{11}=0$, and $z_1^{m,n}=\Delta W^n_m$ and is mean square convergent of order 0.5 and weakly convergent of order 1. Following \cref{equ:SRKL} the corresponding Euler--Maruyama SL scheme is given by
\begin{equation} \label{equ:driftEuler}
\begin{multlined}
        Y_{n+1} = e^{\Delta L^n} Y_n + e^{\Delta L^n} \sum_{m=0}^M   \gt_m(t_n,Y_n)\Delta W^n_m.
\end{multlined}
\end{equation}
This scheme is, depending on how much of the linear diffusion is included in the exponential operator, also known as EI$0$, HomEI$0$ or Lawson Euler scheme, see e.g.\ \cite{erdogan19anc,komori14ase}.

\subsubsection{Platen SL scheme} The Platen scheme for $M=1$ \cite[Chapter 11.1]{kloeden99nso} has the SRK coefficients $s=2$, $z_1^{0,n}=h_n$, $z_1^{1,n}=\Delta W^n - \frac{1}{2\sqrt{h_n}}((\Delta W^n)^2-h_n)$, $z_2^{0,n}=0$, $z_2^{1,n}=\frac{1}{2\sqrt{h_n}}((\Delta W^n)^2-h_n)$, $Z^{m,n}_{1,j}=0$, $Z_{2,1}^{0,n}=h_n$, $Z_{2,1}^{1,n}=\sqrt{h_n}$, $Z^{m,n}_{2,2}=0$ and is mean square convergent of order 1. According to \cref{equ:SRKL} the resulting Platen SL scheme is given by
\begin{equation} \label{equ:driftPlaten}
\begin{aligned}
H_2 &= Y_n + \gt_0(t_n,Y_n)h_n + \gt_1(t_n,Y_n) \sqrt{h_n}, \\
    V^n_{n+1} &= Y_n + \gt_0(t_n,Y_n)h_n + \gt_1(t_n,Y_n) \Delta W^n + \frac{(\Delta W^n)^2 - h_n}{2\sqrt{h_n}}\\ &\qquad\times \Big[e^{-(A_0-\gs A^2_1)h_n -A_1 \sqrt{h_n}}\gt_1(t_n+h_n,e^{(A_0-\gs A^2_1)h_n +A_1 \sqrt{h_n}}H_2) -\gt_1(t_n,Y_n) \Big], \\
    Y_{n+1} &= e^{\Delta L^n}V^n_{n+1}.
\end{aligned}
\end{equation}
\subsubsection{Midpoint SL scheme}
The SRK coefficients for the stochastic implicit midpoint rule \cite[(2.39)]{milstein02nmf} are given by $s=1$, $ Z_{11}^{m,n} = 1/2 \Delta W_m^n$,
$z_{1}^{m,n} = \Delta W_m^n$. For commutative noise, the midpoint rule is mean square convergent of order 1, otherwise of order 0.5.
Applying \cref{equ:SRKL} we obtain
   \begin{equation*}
    \begin{aligned}
            H_1 &= Y_n + \sum_{m=0}^M \frac{1}{2} e^{-\frac12\Delta L^n}\gt_m \left(t_n+\frac{h_n}2,e^{\frac12\Delta L^n}H_1\right)\Delta W^n_m, \\
            V^n_{n+1} &= Y_n + \sum_{m=0}^Me^{-\frac12\Delta L^n}\gt_m \left(t_n+\frac{h_n}2,e^{\frac12\Delta L^n}H_1\right)\Delta W^n_m, \\
            Y_{n+1} &= e^{\Delta L^n}V^n_{n+1},
    \end{aligned}
    \end{equation*}
   which{,} by using that $H_1 = \frac{1}{2}(V^n_{n+1}+Y_n)${,} can be rewritten as
\begin{equation}\label{equ:fullMidpoint}
    \begin{aligned}
        Y_{n+1} &= e^{\Delta L^n} Y_n + \sum_{m=0}^M e^{\frac12 \Delta L^n}\gt_m \left(t_n+\frac{h_n}2,\frac{e^{\frac12 \Delta L^n}Y_n + e^{-\frac12 \Delta L^n}Y_{n+1}}{2} \right) \Delta W^n_m.
    \end{aligned}
\end{equation}

\section{Convergence of SRK Lawson schemes} \label{sec:SRK}
In this section, we will prove that the class of SRK Lawson schemes that we just constructed has, under some conditions, the same order of convergence as the underlying SRK scheme, thus removing the need for individual convergence proofs for the individual methods. Afterwards we will give some numerical examples. To simplify the presentation, from now on we will restrict to equidistant step sizes $h_n=h{=\frac{T-t_0}N}$, $n=0,\dots,N-1$.

\subsection{Strong and weak convergence} \label{sec:ProofConvergence}
To prove that the SRK Lawson schemes, under some conditions, inherit the consistency and convergence of the underlying SRK scheme, we first introduce global Lawson schemes in \cref{alg:GlobalLawson}.
\begin{algorithm}[ht]
\caption{Global stochastic Lawson scheme}\label{alg:GlobalLawson}
\begin{enumerate}[(a)]
  \item Apply a one-step method to get approximations $V^0_{n+1}$ of the exact solution $V^0(t_{n+1})$ of
\begin{equation}\label{equ:TransformedSDEGlob}
\dV^0(t) = \sum_{m=0}^M \gh^0_m(W(t),V^0(t))\star\dW_m(t), \quad V^0(t_0)=x_0
\end{equation}
for $n=0,\dots,N-1$.
\item For $n=0,\dots,N$, define
\begin{equation}\label{equ:TransformationYn}
    Y_{n}=e^{\bar{L}^0_n}V^0_{n}
\end{equation}
where $\bar{L}^0_n
{= \left(A_0-\gs\sum_{m=1}^M A_m^2\right)(t_n-t_0) + \sum_{m=1}^M A_m(W_m^n - W_m^0)
{\in\R^{d\times d}}}$ {with $W_m^n$ being the approximation of $W_m(t_n)$ induced by the one-step method used in step (i)}.
\end{enumerate}
\end{algorithm}
Even though \cref{alg:GlobalLawson} looks similar to \cref{alg:LocalLawson}, there are significant differences: In \cref{alg:LocalLawson} we integrate from $t_{n-1}$ to $t_n$, transform back and then define a new SDE for $V^n$. In contrast, in \cref{alg:GlobalLawson}, we integrate a single SDE for $V^0(t)$ from $t_0$ to $t_n$ before transforming back.

For \cref{alg:GlobalLawson}, we can prove that the global Lawson scheme inherits the strong convergence of the underlying one-step method:
\begin{lemma}[Strong convergence of \cref{alg:GlobalLawson}]\label{thm:convergence}
    Let \cref{ass:commute} hold and let $V^0_n$ be the numerical approximation of \cref{equ:TransformedSDEGlob} by some one-step method of mean square  order $p$, i.e.\  there exists a $c\in \mathbb{R}$ such that for all $N\in\N$ and all $n\in\{0,1,\dots,N\}$ it holds that $\sqrt{{\E}( \|V^0_n - V^0(t_n)\|_2^2)} \leq ch^p$. Moreover{,} assume that $\bar{L}^0_n=L^0(t_n)${, i.\,e., that $W^n=W(t_n)$, }and that $X$ is solution of SDE \eqref{equ:sde}.
    Then there exists a $\tilde{c} \in \mathbb{R}$ such that the numerical approximation $Y_n = e^{L^0(t_n)}V_n^0$ satisfies  for all $N\in\N$ and all $n\in\{0,1,\dots,N\}$
    \begin{equation}
        {\E} \|Y_n - X(t_n)\|_2 \leq \tilde{c}h^p.
    \end{equation}
\end{lemma}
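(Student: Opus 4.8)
The plan is to reduce the claim to the mean-square error bound that is already assumed for the transformed problem \eqref{equ:TransformedSDEGlob}, exploiting that the exact solution of \eqref{equ:sde} is nothing but the transformed solution $V^0$ pushed forward by the exponential. First I would apply \cref{lem:transformation} and the remark following it: since $V^0$ solves \eqref{equ:TransformedSDEGlob} with $V^0(t_0)=x_0=X(t_0)$ and \eqref{equ:sde} has a unique solution, it follows that $X(t)=e^{L^0(t)}V^0(t)$ for all $t\in I$, in particular $X(t_n)=e^{L^0(t_n)}V^0(t_n)$. Together with the definition $Y_n=e^{\bar L^0_n}V^0_n$ and the hypothesis $\bar L^0_n=L^0(t_n)$ this gives the identity
\[
Y_n-X(t_n)=e^{L^0(t_n)}\bigl(V^0_n-V^0(t_n)\bigr).
\]

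Next I would take the Euclidean norm, use submultiplicativity of the spectral norm, take expectations, and apply the Cauchy--Schwarz inequality:
\[
\E\|Y_n-X(t_n)\|_2\le \E\bigl(\|e^{L^0(t_n)}\|_2\,\|V^0_n-V^0(t_n)\|_2\bigr)\le \sqrt{\E\,\|e^{L^0(t_n)}\|_2^2}\,\sqrt{\E\,\|V^0_n-V^0(t_n)\|_2^2}.
\]
The second factor is $\le ch^p$ by hypothesis, so it remains to bound $\E\,\|e^{L^0(t_n)}\|_2^2$ by a constant independent of $n$ and $N$. Here \cref{ass:commute} is essential: the commuting matrices let me factor
\[
e^{L^0(t_n)}=e^{(A_0-\gs\sum_{m=1}^M A_m^2)(t_n-t_0)}\prod_{m=1}^M e^{A_m(W_m(t_n)-W_m(t_0))},
\]
so that, with $K:=\|A_0-\gs\sum_{m=1}^M A_m^2\|_2$ and using $\|e^{B}\|_2\le e^{\|B\|_2}$,
\[
\|e^{L^0(t_n)}\|_2\le e^{K(t_n-t_0)}\prod_{m=1}^M e^{\|A_m\|_2\,|W_m(t_n)-W_m(t_0)|}.
\]
Squaring, taking expectations, using $t_n-t_0\le T-t_0$, the independence of $W_1,\dots,W_M$, and the elementary Gaussian estimate $\E e^{a|Z|}\le 2e^{a^2\sigma^2/2}$ for $Z\sim N(0,\sigma^2)$ (applied with $\sigma^2=t_n-t_0\le T-t_0$), yields $\E\,\|e^{L^0(t_n)}\|_2^2\le C$ with $C$ depending only on $T-t_0$ and the $A_m$. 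Setting $\tilde c:=c\sqrt C$ completes the proof.

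The only mildly delicate step is this last uniform moment bound on $\|e^{L^0(t_n)}\|_2$; everything else is a short chain of identities plus Cauchy--Schwarz. I would stress in particular that $e^{L^0(t_n)}$ and the numerical error $V^0_n-V^0(t_n)$ are \emph{not} independent — the one-step method sees the same Wiener path that defines $L^0$ — which is why Cauchy--Schwarz, rather than factoring the expectation, is the correct tool.
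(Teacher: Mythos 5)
Your proof is correct and follows essentially the same route as the paper: write $Y_n-X(t_n)=e^{L^0(t_n)}\bigl(V^0_n-V^0(t_n)\bigr)$ using \cref{equ:TransformedVar,equ:TransformationYn}, then apply the Cauchy--Schwarz inequality together with mean-square boundedness of $e^{L^0(t_n)}$. The only difference is that the paper cites this boundedness from \cite[Lemma 5.1]{erdogan19anc}, whereas you prove it directly via the commutative factorization and the Gaussian exponential-moment estimate, which is a valid (and self-contained) substitute.
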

{Here, $\|\cdot\|_2$ denotes the Euclidean vector norm.}
\begin{proof}
The strong convergence of \cref{alg:GlobalLawson} follows from $e^{L^0(t_n)}$ being mean square bounded \cite[Lemma 5.1]{erdogan19anc}, thus
\begin{align*}
    {\E} \|Y_n - X(t_n)\|_2 &= {\E} \|e^{L^0(t_n)}V^0_n - e^{L^0(t_n)}V^0(t_n)\|_2 \\&\leq c_1 \sqrt{{\E}( \|V^0_n - V^0(t_n)\|_2^2)} \\ &\leq c_1c h^p,
\end{align*}
{where we also used \cref{equ:TransformationYn}, \cref{equ:TransformedVar} (with $n=0$), and the Cauchy--Schwarz inequality.}
\end{proof}
A function $g:\R^n\to\R$ is called polynomially bounded if  there exist constants $K$ and $\kappa\geq0$ such that for all $x\in\R^n$ it holds that
\[
|g(x)|\leq K(1+\|x\|_2^\kappa).
\]
In the following we denote by $C^{2(\tilde{p}+1)}_P(\R^n,\R)$ the class of functions $g\in C^{2(\tilde{p}+1)}(\R^n,\R)$ for which $g$ and all its partial derivatives of order up to $2(\tilde{p}+1)$, inclusively, are polynomially bounded.
With this definition in place, we can also prove that the global Lawson scheme inherits the weak order of convergence of the underlying one-step method:
\begin{lemma}[Weak convergence of \cref{alg:GlobalLawson}]\label{thm:WeakConvergence}
Let \cref{ass:commute} hold and assume that $X$ is solution of SDE \eqref{equ:sde}. Moreover, assume that
\begin{enumerate}[(a)]
\item \label[item]{it:Amskewsymmetric}$A_m$ for $m> 0$ are skew-symmetric,
\item \label[item]{it:approxautonomous}{$W^n$ and $V^0_n$} are the numerical approximations of {$W(t_n)$ and} \cref{equ:TransformedSDEGlob} obtained by applying some one-step method to the autonomous system \cref{equ:autonomousSDE}, {and we assume especially that $W^n_0=W_0(t_n)=t_n$},
\item \label[item]{it:weakapproximation}this approximation is of weak order $\tilde{p}$, i.e.\ for all $g\in C^{2(\tilde{p}+1)}_P(\R^{M+1}\times\R^{d},\R)$ there exists a $c\in \mathbb{R}$ such that for all $N\in\N$ and all $n\in\{0,1,\dots,N\}$ it holds that $|{\E}(g(W^n,V^0_n) - g(W(t_n),V^0(t_n))| \leq ch^{\tilde{p}}$.
\end{enumerate}
    Then there exists for all $f\in C^{2(\tilde{p}+1)}_P(\R^d,\R)$ a $\tilde{c} \in \mathbb{R}$ such that the numerical approximation $Y_n = e^{\bar{L}^0_n}V_n^0$ satisfies for all $N\in\N$ and all $n\in\{0,1,\dots,N\}$
    \begin{equation}
        |{\E} f(Y_n) - f(X(t_n))|\leq \tilde{c}h^{\tilde{p}}.
    \end{equation}
\end{lemma}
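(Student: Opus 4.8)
The plan is to reduce the weak convergence statement for $Y_n$ to the weak convergence assumption \labelcref{it:weakapproximation} for the autonomous approximation $(W^n,V^0_n)$, by exhibiting $f(Y_n)$ as the evaluation of a suitable function $G\in C^{2(\tilde p+1)}_P(\R^{M+1}\times\R^d,\R)$ at $(W^n,V^0_n)$, and likewise $f(X(t_n))$ as $G(W(t_n),V^0(t_n))$. Concretely, from \cref{equ:TransformationYn} we have $Y_n=e^{\bar L^0_n}V^0_n$, where by \cref{eq:Lndef} and the hypothesis $W^n_0=t_n$ the matrix $\bar L^0_n$ depends only on $W^n$ (through $t_n=W_0^n$ and the increments $W_m^n-W_m^0$). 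So define
\[
G(w,x) := f\!\left(\exp\!\Big[\big(A_0-\gs\textstyle\sum_{m=1}^M A_m^2\big)(w_0-t_0)+\sum_{m=1}^M A_m(w_m-W_m^0)\Big]\,x\right),
\]
so that $f(Y_n)=G(W^n,V^0_n)$ and, using \cref{equ:TransformedVar} with $n=0$ (i.e.\ $X(t_n)=e^{L^0(t_n)}V^0(t_n)$), also $f(X(t_n))=G(W(t_n),V^0(t_n))$. Then
\[
|\E f(Y_n)-f(X(t_n))| = |\E\big(G(W^n,V^0_n)-G(W(t_n),V^0(t_n))\big)|,
\]
and the result follows immediately from \cref{it:weakapproximation} \emph{provided} $G\in C^{2(\tilde p+1)}_P$.

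The two properties to check for $G$ are smoothness and polynomial boundedness of $G$ together with its partial derivatives up to order $2(\tilde p+1)$. Smoothness is clear: $\exp$ is entire, matrix multiplication is polynomial in its arguments, and $f\in C^{2(\tilde p+1)}$, so $G$ is a composition of $C^{2(\tilde p+1)}$ maps. For polynomial boundedness one differentiates through the composition (Faà di Bruno / chain rule): every partial derivative of $G$ is a finite sum of terms, each a product of (a) a partial derivative of $f$ of some order $\le 2(\tilde p+1)$ evaluated at $e^{\bar L(w)}x$, and (b) entries of the matrix $e^{\bar L(w)}$ and of its derivatives in $w$, each of which is again a matrix exponential times a polynomial in the $A_m$'s, times a polynomial in $x$. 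The derivatives of $f$ are polynomially bounded in $e^{\bar L(w)}x$ by hypothesis on $f$; so the whole product is bounded by $K(1+\|e^{\bar L(w)}x\|_2^{\kappa'})$ times polynomial factors in $\|x\|_2$ and in $\|e^{\bar L(w)}\|$-type quantities. This is where \cref{it:Amskewsymmetric} enters: since $A_m$ is skew-symmetric for $m\ge 1$, the stochastic part $\sum_{m=1}^M A_m(w_m-W_m^0)$ is skew-symmetric, hence $\exp$ of it is orthogonal, so $\|e^{\bar L(w)}\|_2 = \|e^{(A_0-\gs\sum A_m^2)(w_0-t_0)}\|_2$ depends only on the deterministic time argument $w_0-t_0\in[0,T-t_0]$ and is therefore \emph{uniformly bounded} over the relevant range; in particular $\|e^{\bar L(w)}x\|_2\le C\|x\|_2$ with $C$ independent of $w$. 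Combined with $\|A_m\|$ being constant, every factor is polynomially bounded in $(w,x)$, so $G\in C^{2(\tilde p+1)}_P(\R^{M+1}\times\R^d,\R)$ as required.

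The main obstacle — and the reason \cref{it:Amskewsymmetric} is assumed rather than the weaker mean-square boundedness of \cref{erdogan19anc} used in the strong case — is exactly this uniform (deterministic) bound on $e^{\bar L^0_n}$. Without skew-symmetry, $\|e^{\bar L(w)}\|_2$ can grow like $e^{c|w_m|}$ in the noise variables $w_m$, which is not polynomially bounded, so $G$ would fall outside the class $C^{2(\tilde p+1)}_P$ for which \cref{it:weakapproximation} is available, and one could not conclude. (The skew-symmetric case is the natural one for the applications in the paper, e.g.\ Schrödinger-type equations.) The remaining steps — verifying $f(X(t_n))=G(W(t_n),V^0(t_n))$ via \cref{lem:transformation} with $n=0$, and the chain-rule bookkeeping for the derivatives of $G$ — are routine; I would state the key bound $\|e^{\bar L^0_n}\|_2\le C$ as a short sublemma and then invoke \cref{it:weakapproximation} with $g=G$ to finish.
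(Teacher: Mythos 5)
Your proposal is correct and follows essentially the same route as the paper: there one defines $\hs_s(x,y)=f\bigl(e^{(A_0-\gs\sum_{m}A_m^2)(s-t_0)}e^{\sum_{m}A_m(x_m-W_m^0)}y\bigr)$ with the time $s\in I$ as a parameter, checks $f(Y_n)=\hs_{t_n}(W^n,V^0_n)$ and $f(X(t_n))=\hs_{t_n}(W(t_n),V^0(t_n))$, and uses skew-symmetry to get $\|e^{\sum_m A_m(x_m-W_m^0)}\|_2=1$ so that polynomial boundedness, uniform in $s$, transfers from $f$ and its derivatives; your $G$ is the same function with $s$ folded into the argument $w_0$. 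The only cosmetic caveat is that with $w_0$ a free variable your $G$ need not be polynomially bounded in $w_0$ over all of $\R$ (the deterministic exponential can grow), which the paper sidesteps by keeping the time as a parameter restricted to the compact interval $I$ --- precisely the ``relevant range'' restriction you already note.
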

\begin{proof}
For $s\in I= [t_0,T]$ {and $x\in\R^M$, $y\in\R^d$} let
\[
\hs_s({x,y})=f(e^{(A_0 + \gs\sum_{m=1}^M A_m)(s-t_0)}e^{\sum_{m=1}^M A_m ({x_m}-W^0_m)}{y}).
\]
Then it follows by \cref{it:approxautonomous} that
\[
f(Y_n)=\hs_{t_n}(W^n,V^0_n),\qquad f(X(t_n))=\hs_{t_n}(W(t_n),V^0(t_n)).
\]
To prove the assertion it is therefore by \cref{it:weakapproximation} enough to show that $\hs_s$ and all its derivatives up to order $2(\tilde{p}+1)$ satisfy an estimate of the form
\[
\|\hs_s({x,y})\|_2\leq K(1+\|{x}\|_2^{2\kappa}+\|{y}\|_2^{2\kappa})
\]
uniformly for all $s\in I$ (see \cite[Chapter 8]{milstein95nio}),
which follows from $f\in C_P^{2(\tilde{p}+1)}$ and ${\|e^{\sum_{m=1}^M A_m (x_m-W^0_m)}\|_2}=1$ due to \cref{it:Amskewsymmetric} {and the exponential of a skew symmetric matrix having Euclidean norm 1}.
\end{proof}
Using a {DSL} scheme trivially satisfies \cref{it:Amskewsymmetric,ass:commute}, and thus any {DSL} scheme immediately inherits the weak convergence of the underlying one-step method.

{
  We next demonstrate that for a given underlying SRK scheme, the local and the global Lawson scheme (\cref{alg:LocalLawson,alg:GlobalLawson}) result in
  the same output. Based on the autonomous form  \cref{equ:autonomousSDE}, an SRK
method applied to the autonomous version of the globally transformed equation \cref{equ:TransformedSDEGlob} can be written as}

\begin{equation}\label{equ:SRKglobal}
\begin{aligned}
    H^0_i &= V^0_n + \sum_{j=1}^s \sum_{m=0}^M Z_{ij}^{m{,n}} e^{-L^{0,n}_j}\gt_m(t_n+c^{n,j}_{{0}},e^{L^{0,n}_j} H^0_j), \\
    V^0_{n+1} &= V^0_n + \sum_{i=1}^s \sum_{m=0}^M z_i^{m{,n}} e^{-L^{0,n}_i}\gt_m({t_n}+c^{n,i}_{{0}}, e^{L^{0,n}_i}H^0_i)
\end{aligned}
\end{equation}
with \[L^{0,n}_i= \left(A_0 - \gs \sum_{m=1}^M A_m\right)(t_n+{c^{n,i}_{0}}-t_0) + \sum_{m=1}^M A_m(W_m^n+c^{n,i}_m{-W_m^0})\]
{and with $\gt_m(t,x)$ defined in \cref{lem:transformation}.
Using this formulation, we get the following result:}

\begin{lemma}\label{lem:comparison}
       \Cref{alg:LocalLawson} {with an arbitrary SRK method \cref{equ:SRK} applied to \cref{equ:TransformedSDELocal}} and \cref{alg:GlobalLawson} {with the same underlying SRK method \cref{equ:SRKglobal} applied to \cref{equ:TransformedSDEGlob}} give the same sequence of approximation points $Y_n$.
\end{lemma}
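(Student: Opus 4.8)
The plan is to prove the equality by induction on $n$, showing that at every step the stage values and the intermediate iterates of the two algorithms differ only by a left factor $e^{\bar L^0_n}$, so that after the back-transformation the points $Y_n$ agree. Everything hinges on three elementary identities that are consequences of \cref{ass:commute}. Since the $A_m$ pairwise commute, so do all the matrices $\bar L^0_n$, $\Delta L^n$, $\Delta L^n_i$, $L^{0,n}_i$ that are assembled from them, and hence $e^{P+Q}=e^Pe^Q$ for any two of them. Concretely: $\bar L^0_0=0$; comparing the definition of $\bar L^0_n$ in \cref{alg:GlobalLawson} with \cref{eq:DeltaLndef}, and using $\Delta W^n_0=h$ together with $W^{n+1}_m-W^n_m=c^n_m=\Delta W^n_m$, gives $\bar L^0_{n+1}=\bar L^0_n+\Delta L^n$; and comparing the definition of $L^{0,n}_i$ with that of $\Delta L^n_i$ gives $L^{0,n}_i=\bar L^0_n+\Delta L^n_i$. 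Exponentiating, $e^{\bar L^0_{n+1}}=e^{\bar L^0_n}e^{\Delta L^n}$ and $e^{L^{0,n}_i}=e^{\bar L^0_n}e^{\Delta L^n_i}$.

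The inductive claim is that the two algorithms have produced the same $Y_n$ up to step $n$ and that, for \cref{alg:GlobalLawson}, $Y_n=e^{\bar L^0_n}V^0_n$; since $V^n_n=Y_n$ in \cref{alg:LocalLawson} this is just $V^n_n=e^{\bar L^0_n}V^0_n$, and the base case $n=0$ holds because $Y_0=x_0=V^0_0$ and $\bar L^0_0=0$. For the inductive step I would substitute $e^{\bar L^0_n}H^0_i$ for $H_i$ in the stage equations of \cref{equ:SRKL}: inside each $\gt_m$ one rewrites $e^{\Delta L^n_j}e^{\bar L^0_n}=e^{L^{0,n}_j}$, and the prefactor $e^{-\Delta L^n_j}$ as $e^{\bar L^0_n}e^{-L^{0,n}_j}$, and uses $Y_n=e^{\bar L^0_n}V^0_n$; this shows that the family $(e^{\bar L^0_n}H^0_i)_i$ satisfies exactly the stage system \cref{equ:SRKglobal}, so $H_i=e^{\bar L^0_n}H^0_i$. (For explicit methods this is immediate; for implicit methods the linear map $H_i\mapsto e^{-\bar L^0_n}H_i$ is a bijection between the solution sets of the two stage systems, which is what makes ``the same sequence of approximation points'' meaningful.) The same two manipulations applied to the update for $V^n_{n+1}$ give $V^n_{n+1}=e^{\bar L^0_n}\bigl(V^0_n+\sum_i\sum_m z^{m,n}_i e^{-L^{0,n}_i}\gt_m(t_n+c^{n,i}_0,e^{L^{0,n}_i}H^0_i)\bigr)=e^{\bar L^0_n}V^0_{n+1}$, and then $Y_{n+1}=e^{\Delta L^n}V^n_{n+1}=e^{\Delta L^n}e^{\bar L^0_n}V^0_{n+1}=e^{\bar L^0_{n+1}}V^0_{n+1}$, which is precisely the back-transformation \cref{equ:TransformationYn} of \cref{alg:GlobalLawson}. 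This closes the induction.

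I do not expect a genuinely hard step; the proof is bookkeeping with matrix exponentials. The one thing to watch is that \cref{ass:commute} must be invoked exactly at the places where an exponential is split or merged --- it is the commutativity of the $A_m$, hence of $\bar L^0_n$, $\Delta L^n$, $\Delta L^n_i$ and $L^{0,n}_i$, that legitimizes $e^Pe^Q=e^{P+Q}$ --- and that the coefficients $Z^{m,n}_{ij}$, $z^{m,n}_i$ and the time arguments $t_n+c^{n,i}_0$ are literally the same in \cref{equ:SRKL} and \cref{equ:SRKglobal}, so that the only difference between the two stage systems is the exponential prefactors absorbed by the identities above. The implicit-method caveat is the only point where the statement as phrased requires a word of justification.
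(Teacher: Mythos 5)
Your proof is correct and follows essentially the same route as the paper: induction on $n$, the identities $\bar L^0_{n+1}=\bar L^0_n+\Delta L^n$ and $L^{0,n}_i=\bar L^0_n+\Delta L^n_i$, and the substitution $H_i=e^{\bar L^0_n}H^0_i$ to match the two stage systems. Your explicit remark about the bijection between solution sets of the implicit stage equations is a small point of care that the paper's proof passes over silently, but it does not change the argument.
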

\begin{proof}
    We prove this by induction. Let $Y_n$ denote the numerical solution by \cref{alg:LocalLawson}. Similarly, let $e^{\bar{L}^0_n}V^0_{n}$ be the numerical approximation obtained by \cref{alg:GlobalLawson}. It holds that $Y_0=e^{\bar{L}^0_0}V^0_{0}=x_0$. We assume now that it {holds} that $e^{\bar{L}^0_n}V^0_{n} = Y_n$, and prove that this implies that $e^{\bar{L}^0_{n+1}}V^0_{n+1} = Y_{n+1}$:

    For this, we consider a single update from $t_n$ to $t_{n+1}$ using the SRK method {\eqref{equ:SRKglobal}} {for \cref{alg:GlobalLawson}}.
Defining $H_i=e^{\bar{L}^0_n}H_i^0$ and multiplying $V^0_{n+1}$ by $e^{\bar{L}^0_{n+1}}$ it holds then
    \begin{align*}
        H_i &= e^{\bar{L}^0_n} V^0_n + \sum_{j=1}^{s} \sum_{m=0}^{M} Z_{ij}^{m{,n}} e^{\bar{L}^0_n - L^{0,n}_j}\gt_m(t_n+c_0^{n,j},e^{L^{0,n}_j-\bar{L}^0_n}H_j),\\
      e^{\bar{L}^0_{n+1}} V^0_{n+1}
        &= e^{\bar{L}^0_{n+1}} V^0_n + \sum_{i=1}^{s}\sum_{m=0}^M z_i^{m{,n}} e^{\bar{L}^0_{n+1}-L^{0,n}_i}\gt_m(t_n+{c^{n,i}_{0}}, e^{L^{0,n}_i-\bar{L}^0_n} H_i).
    \end{align*}

    \noindent Using the induction hypothesis, $Y_n = e^{\bar{L}^0_n}V^0_n$, and $L_i^{0,n} - \bar{L}^0_n = \Delta L_i^n${,} we obtain
    \begin{align*}
    H_i &= Y_n + \sum_{j=1}^{s} \sum_{m=0}^{M} Z_{ij}^{m{,n}} e^{-\Delta L_j^n}\gt_m(t_n+{c_0^{n,j}},e^{\Delta L_j^n}H_j),\\
        e^{\bar{L}^0_{n+1}} V^0_{n+1}&= e^{\bar{L}^0_{n+1}-\bar{L}^0_n} Y_n + \sum_{i=1}^{s}\sum_{m=0}^M z_i^{m{,n}} e^{\bar{L}^0_{n+1}-\bar{L}^0_n - \Delta L_i^n}\gt_m(t_n+c^{n,i}_{{0}}, e^{\Delta L_i^n} H_i).
    \end{align*}

    \noindent Finally by using $\Delta L^n= \bar{L}^0_{n+1}-\bar{L}^0_n$ we obtain
    \begin{align*}
        H_i &=  Y_n + \sum_{j=1}^{s} \sum_{m=0}^{M} Z_{ij}^{m{,n}} e^{- \Delta L^n_j}\gt_m(t_n+{c_0^{n,j}},e^{\Delta L^n_j}H_j), \\
        e^{\bar{L}^0_{n+1}} V^0_{n+1}&= e^{\Delta L^n} \left[ Y_n + \sum_{i=1}^{s}\sum_{m=0}^M z_i^{m{,n}} e^{-\Delta L^n_i}\gt_m(t_n+{c^{n,i}_{0}},e^{\Delta L^n_i} H_i) \right],
    \end{align*}
    which is identical to {\cref{alg:LocalLawson} given in} \eqref{equ:SRKL}, and consequently it holds that $Y_{n+1}=e^{\bar{L}^0_{n+1}} V^0_{n+1}$.
\end{proof}

\noindent It follows that the (local) Lawson scheme converges:
\begin{theorem}[Strong convergence of \cref{alg:LocalLawson}]\label{thm:convergenceLocalScheme}
Let \cref{ass:commute} hold, let $X$ be the solution of SDE \eqref{equ:sde} and $Y_n$ be the result of applying \cref{alg:LocalLawson} with an underlying SRK method of mean square  order $p$. {Assume further that  $\Delta W_m^n=W_m(t_{n+1})-W_m(t_n)$ in \cref{eq:DeltaLndef}.} Then there exists a $\tilde{c} \in \mathbb{R}$ such that for all $N\in\N$ and all $n\in\{0,1,\dots,N\}$ it holds {that}
    \begin{equation}
        {\E}\|Y_n - X(t_n)\|_2 \leq \tilde{c}h^p.
    \end{equation}
\end{theorem}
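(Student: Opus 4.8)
The idea is to transfer the already-established strong convergence of the \emph{global} Lawson scheme (\cref{thm:convergence}) to the \emph{local} one. By \cref{lem:comparison}, if \cref{alg:LocalLawson} and \cref{alg:GlobalLawson} are both driven by the same underlying SRK method \cref{equ:SRK} (applied in autonomous form to \cref{equ:autonomousSDE}), they generate exactly the same grid values $Y_n$. Hence it is enough to check that the hypotheses of \cref{thm:convergence} are satisfied for this global scheme and then read off its conclusion $\E\|Y_n-X(t_n)\|_2\le\tilde ch^p$.

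Two hypotheses have to be verified. First, $\bar L^0_n=L^0(t_n)$: since $\Delta W^n_m=c^n_m=\sum_i z_i^{m,n}$ (the quantity entering \cref{eq:DeltaLndef}) is by assumption the exact increment $W_m(t_{n+1})-W_m(t_n)$, and $\Delta W^n_0=h$, telescoping the global update $W^{n+1}_m=W^n_m+c^n_m$ from $0$ to $n$ gives $W^n_m=W_m(t_n)$ for every $m$ and $n$, equivalently $\bar L^0_n=L^0(t_n)$, exactly as required. Second, the input hypothesis of \cref{thm:convergence} asks that the one-step method used produces a mean-square order $p$ approximation of \cref{equ:TransformedSDEGlob}; since the underlying SRK method is assumed of mean-square order $p$, applying it to the autonomous transformed system \cref{equ:autonomousSDE} yields $\sqrt{\E\|\bar V^0_n-\bar V^0(t_n)\|_2^2}\le ch^p$, and restricting to the last $d$ components gives the bound for $V^0_n$. \Cref{thm:convergence} then closes the argument.

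The step I expect to be the real obstacle is the second one: justifying that the order $p$ of the underlying SRK method genuinely carries over to the transformed system. The coefficients $\gb^0_m$ of \cref{equ:autonomousSDE} are built from the random matrix exponentials $e^{\pm L^0(t)}$, which are not almost surely bounded, so after transformation the coefficients are only Lipschitz in the state with a path-dependent Lipschitz constant and, moreover, depend on the auxiliary Wiener components through these exponentials; an off-the-shelf SRK convergence theorem therefore does not apply verbatim. One has to re-examine the usual local-error/error-accumulation estimate, controlling the extra factors by the fact that, under \cref{ass:commute}, $e^{L^0(t)}$ and $e^{-L^0(t)}$ (and their values at the stage abscissae) have finite moments of every order, uniformly on $[t_0,T]$, by \cite[Lemma 5.1]{erdogan19anc}. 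Combined with the assumed polynomial growth and smoothness of the $g_m$, this makes the one-step mean-square error of the SRK step on \cref{equ:autonomousSDE} of order $h^{p+1/2}$ with a constant of finite moments, so that the global $\gO(h^p)$ bound follows from the standard fundamental mean-square convergence theorem for one-step methods \cite{milstein95nio}. The remaining bookkeeping — matching the stage values $H_i=e^{\bar L^0_n}H^0_i$ and checking $L^{0,n}_i-\bar L^0_n=\Delta L^n_i$ — is routine and is precisely what is carried out in the proof of \cref{lem:comparison}.
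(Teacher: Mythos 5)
Your proposal follows essentially the same route as the paper: the paper's proof is precisely the one-line combination of \cref{lem:comparison} with \cref{thm:convergence} (choosing $\bar{L}^0_{n}=\sum_{i=0}^{n-1}\Delta L^i$), which is what you do. The obstacle you flag in your last paragraph is real in principle but is sidestepped by the way the hypotheses are phrased — the mean-square order $p$ in \cref{thm:convergence} is assumed directly for the approximation of the transformed system \cref{equ:TransformedSDEGlob}, so no separate argument that the order carries over under the Lawson transformation is required (or given) in the paper.
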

\begin{proof}
    This is proven by combining \cref{lem:comparison,thm:convergence} (choosing $\bar{L}^0_{n} = \sum_{i=0}^{n-1} \Delta L^i$).
\end{proof}
\begin{theorem}[Weak convergence of \cref{alg:LocalLawson}]
Let \cref{ass:commute} hold and assume that $X$ is {the} solution of SDE \eqref{equ:sde}. Moreover, assume that
\begin{enumerate}[(a)]
\item $A_m$ for $m> 0$ are skew-symmetric,
\item {$W^n$ and} $V^{{n}}_{{n+1}}$ are the numerical approximations of {$W(t_n)$ and } {\cref{equ:TransformedSDELocal} obtained by the SRK method \cref{equ:SRKL}},
\item this approximation is of weak order $\tilde{p}$, i.e.\ for all $g\in C^{2(\tilde{p}+1)}_P(\R^{M+1}\times\R^{d},\R)$ there exists a $c\in \mathbb{R}$ such that for all $N\in\N$ and all $n\in\{0,1,\dots,N\}$ it holds that $|{\E}(g(W^n,V^0_n) - g(W(t_n),V^0(t_n))| \leq ch^{\tilde{p}}$,
\item $W^n_0=W_0(t_n)=t_n$.
\end{enumerate}

    Then for all $f\in C^{2(\tilde{p}+1)}_P{(\R^{d},\R)}$ {there exists some constant $c_f > 0$ such that} the numerical approximation $Y_{n+1}=e^{\Delta L^n}V^n_{n+1}$ satisfies {for all $N\in\N$ and all $n\in\{0,1,\dots,N\}$}
    \begin{equation}
         | {\E} f(Y_{{n}}) - {\E}f(X(t_{{n}}))| \leq c_{{f}} h^{\tilde{p}}.
    \end{equation}
    \end{theorem}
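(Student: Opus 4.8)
The plan is to mirror the proof of \cref{thm:convergenceLocalScheme}, replacing the strong convergence input by its weak counterpart: reduce the local scheme to the global scheme via \cref{lem:comparison}, and then invoke the weak convergence result for the global scheme, \cref{thm:WeakConvergence}.

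First I would apply \cref{lem:comparison} with the underlying SRK method \eqref{equ:SRK}: it guarantees that \cref{alg:LocalLawson} and \cref{alg:GlobalLawson} (the latter with the same SRK method applied in autonomous form) produce exactly the same sequence $Y_n$, and in particular $Y_n = e^{\bar{L}^0_n}V^0_n$ with the choice $\bar{L}^0_n = \sum_{i=0}^{n-1}\Delta L^i$, while the random coefficients $W^n$ and the back-transformed stage values occurring in the local recursion \eqref{equ:SRKL} coincide with those in the global recursion \eqref{equ:SRKglobal}. Under this identification, the hypotheses of the present theorem are precisely those of \cref{thm:WeakConvergence}: assumption (a) is the skew-symmetry \cref{it:Amskewsymmetric}; assumptions (b) and (d), together with $\Delta W^n_0 = h_n$ and the autonomous form \eqref{equ:autonomousSDE}, say that $W^n$ and $V^0_n$ are obtained by a one-step method applied to the autonomous system with $W^n_0 = t_n$, i.e.\ \cref{it:approxautonomous}; and assumption (c) is \cref{it:weakapproximation}.

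Then I would simply invoke \cref{thm:WeakConvergence}: for every $f\in C^{2(\tilde{p}+1)}_P(\R^d,\R)$ there is a constant $\tilde{c}$ with $|\E f(Y_n) - f(X(t_n))| \le \tilde{c}\,h^{\tilde{p}}$ for all $N\in\N$ and all $n\in\{0,\dots,N\}$, and taking $c_f = \tilde{c}$ yields the assertion (the bound stated for $Y_{n+1}$ being just this family of bounds over the whole index range).

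The only step that deserves attention is the reduction: one has to be sure that ``the same underlying SRK method'' really means that the local increments $\Delta W^n_m = c^n_m$ and the locally back-transformed stage values agree with those generated in the global (autonomous) formulation, so that the weak-order assumption, which is naturally formulated for the autonomous system \eqref{equ:autonomousSDE}, transfers verbatim. This is exactly the content of \cref{lem:comparison}, whose induction identifies $H_i = e^{\bar{L}^0_n}H_i^0$ and $Y_n = e^{\bar{L}^0_n}V^0_n$; no further estimates are needed. In particular, no smoothness or growth hypotheses beyond $f\in C^{2(\tilde{p}+1)}_P$ enter here — the polynomial-growth control of $\hs_s$ and its derivatives required by the Milstein weak-error framework is established inside the proof of \cref{thm:WeakConvergence} and relies precisely on the skew-symmetry of the $A_m$, $m>0$.
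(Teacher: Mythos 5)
Your proposal is correct and follows exactly the paper's own argument: reduce \cref{alg:LocalLawson} to \cref{alg:GlobalLawson} via \cref{lem:comparison} (with $\bar{L}^0_{n}=\sum_{i=0}^{n-1}\Delta L^i$) and then apply \cref{thm:WeakConvergence}. The added verification that the hypotheses transfer under this identification is a sensible elaboration of what the paper leaves implicit.
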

\begin{proof}
    This is proven by combining \cref{lem:comparison,thm:WeakConvergence} (and choosing $\bar{L}^0_{n+1} = \sum_{i=0}^{n} \Delta L^i$).
\end{proof}
We want to emphasize that for Lipschitz continuous $f${,} the weak convergence order {is at least as big as} the strong order of convergence. So in the case where not all $\{A_m\}_{m>0}$ are skew-symmetric, we still have weak convergence of  {at least} the same order as the strong order. However, this is not necessarily of the same order as the weak order of the underlying SRK scheme.

For the convergence of exponential schemes that do not originate from a Lawson type transformation{,} we refer to \cite{arara19sbs}. The convergence of {SRK} schemes is rather well studied and we refer to e.\,g.\ \cite{burrage96hso,roessler07sor,debrabant08bao,roessler09sor} for the analysis of the mean square convergence order and \cite{komori97rta,burrage00oco,roessler06rta,debrabant08bao,roessler10saw} for the analysis of the weak convergence order.

In the following, we present some numerical examples showing the expected order of convergence.

\subsection{Numerical example}\label{sec:ConvNumExample}
In this section, we numerically show the strong ($p$) and weak ($\tilde{p}$) order of convergence of the {following} schemes:
\begin{itemize}
    \item {T}he Euler--Maruyama DSL scheme (\enquote{EM DSL}), \cref{equ:driftEuler} with $A_m=0$ for $m>0$, with expected strong and weak convergence orders $p=0.5$ and $\tilde{p}=1$.
    \item {T}he Platen DSL scheme (\enquote{Platen DSL}), {defined in \cref{equ:driftPlaten} for $M=1$}, {with} $A_m=0$ for $m>0$, with expected strong and weak convergence orders $p=1$ and $\tilde{p}=1$.
    \item {T}he Midpoint FSL scheme (\enquote{Midpoint FSL}), \cref{equ:fullMidpoint}, with expected strong and weak convergence orders $p=1$ and $\tilde{p}=1$. In the numerical experiments, the {solution of the} implicit equation will be  {approximated with one single Newton step}, evaluating the Jacobian once per time step{, which has been proved sufficient to maintain the correct order, see \cite{debrabant08bao}. For the below example, we also tested numerically that the results are not influenced significantly by doing more iterations.}
    \item {T}he Platen strong order $1.5$ DSL scheme (\enquote{Platen 1.5 DSL}), defined in \cref{sec:Platen15} for $M=1$, with $A_m=0$ for $m>0$, with expected strong convergence order $p=1.5$. For this scheme, we only demonstrate numerically the order of strong convergence.
    \item {T}he Platen weak order $2.0$ DSL scheme (\enquote{Platen 2.0 DSL}), defined in \cref{sec:Platen2} for $M=1$, with $A_m=0$ for $m>0$, with expected weak convergence order $p=2.0$. For this scheme, we only demonstrate numerically the order of weak convergence.
\end{itemize}
{For our numerical investigations we will use the mildly stiff {\ito} SDE}
\begin{equation}\label{eq:testSDEstrongconvergence}
\begin{multlined}
        \dX =\left[\begin{pmatrix}
        -\lambda & 0 \\ 0 & -\lambda
    \end{pmatrix} X + \begin{pmatrix}
        0 & U_0(X) \\ -U_0(X) & 0
    \end{pmatrix}{X}\right] \dt {+} \begin{pmatrix}
        0 & 0.2 \\ -0.2 & 0
    \end{pmatrix} X \dW,
\end{multlined}
\end{equation}
with $X(0)=(1,0)^{\top}$, $U_0(X) = \frac{1}{5}(X_1+X_2)^5${, $t\in[0,1]$ and $\lambda\in\{1,5\}$}. This corresponds to a non-linear oscillator {\cite[Eq.\ (18)]{cohen12otn}} {perturbed by} a linear attractor. {The Midpoint FSL scheme will be applied to the corresponding Stratonovich version.} {By \ito's formula it follows that $\dmath\|X(t)\|_2^2=(-2\lambda+0.04)\|X(t)\|_2^2\dt$ and thus {$\|X(t)\|_2=e^{(-\lambda+0.02)t}\to 0$ for $t\to \infty$ if $\lambda>0.2$}.}
{In all cases we choose $A_0=-\bigl(\begin{smallmatrix}
\lambda & 0 \\ 0 & \lambda
\end{smallmatrix}\bigr)$ and run the numerical experiments in MATLAB version R2020a with 20 single-threaded workers on a 4×6-core Xeon 2.67 GHz Linux (Ubuntu 18.04) machine with 256 GB memory.} {The MATLAB source code for all numerical experiments in this section can be found at \cite{debrabant20mcr}.}
\subsubsection{Strong {convergence}}
We simulate {40 batches of each 25 paths for SDE} \eqref{eq:testSDEstrongconvergence} {and} compare the results to a numerical solution obtained by the {Platen 1.5 DSL} scheme with step size $h=2^{-{24}}$.

{For step sizes $h\in\{2^{-13},2^{-14},2^{-15}, 2^{-16}, 2^{-17}, 2^{-18}\}$ we} report the average errors for $\lambda=1$ and $\lambda=5$ in \cref{fig:convergence,fig:convergence5}. The {$95\%$}-confidence intervals have been calculated and span in all cases less than {$\pm 6.1\%$} of the corresponding error value.

\pgfplotstableread[col sep=comma]{Data/Convergence/MildlyStiffItoSDE_OuterIter=1_Miter=40_Mbatch=25_Nh=24_E_EDSL_P_PDSL_M_MFSL_P15_P15DSL_Param=1_red.csv}\ConvStrongLambdaOne
\pgfplotstableread[col sep=comma]{Data/Convergence/MildlyStiffItoSDE_OuterIter=1_Miter=40_Mbatch=25_Nh=24_E_EDSL_P_PDSL_M_MFSL_P15_P15DSL_Param=1.csv}\TimingsStrongLambdaOne
\pgfplotstableread[col sep=comma]{Data/Convergence/MildlyStiffItoSDE_OuterIter=1_Miter=40_Mbatch=25_Nh=24_E_EDSL_P_PDSL_M_MFSL_P15_P15DSL_Param=5_red.csv}\ConvStrongLambdaFive
\pgfplotstableread[col sep=comma]{Data/Convergence/MildlyStiffItoSDE_OuterIter=1_Miter=40_Mbatch=25_Nh=24_E_EDSL_P_PDSL_M_MFSL_P15_P15DSL_Param=5.csv}\TimingsStrongLambdaFive

\begin{figure}[htp!]

\begin{subfigure}{\textwidth}
\begin{tikzpicture}
    \begin{axis}[
        name=plot1,
        width=0.9\textwidth,
        height=0.2\textheight,
        axis x line=bottom,
        xmode = log,
        log basis x=2,
        axis y line=left,
        ymode = log,
        log basis y=2,
        xlabel=$h$,
        ylabel={${\hat{\E}}\|Y_N - X(T_N)\|_2$},
        domain=0:1,
        legend style={at={(0.5,1.5)},anchor=north},
        legend columns=4]
        \addplot[name path,only marks,blue,mark=*] table[x ={h}, y={eEDSL}]{\ConvStrongLambdaOne};
        \addplot[mark=none, blue,forget plot] table[x={h}, y={create col/linear regression={y=eEDSL}}]{\ConvStrongLambdaOne};
        \xdef\slopeEDSL{\pgfplotstableregressiona}
        \addlegendentry{EM DSL, $p=\pgfmathprintnumber{\slopeEDSL}$}

        \addplot[name path,only marks,red,mark=*] table[x ={h}, y={ePDSL}]{\ConvStrongLambdaOne};
        \addplot[mark=none, red,forget plot] table[x={h}, y={create col/linear regression={y=ePDSL}}]{\ConvStrongLambdaOne};
        \xdef\slopePDSL{\pgfplotstableregressiona}
        \addlegendentry{Platen DSL, $p=\pgfmathprintnumber{\slopePDSL}$}

        \addplot[name path,only marks,green,mark=*] table[x ={h}, y={eMFSL}]{\ConvStrongLambdaOne};
        \addplot[mark=none, green,forget plot] table[x={h}, y={create col/linear regression={y=eMFSL}}]{\ConvStrongLambdaOne};
        \xdef\slopeMFSL{\pgfplotstableregressiona}
        \addlegendentry{Midpoint FSL, $p=\pgfmathprintnumber{\slopeMFSL}$}

        \addplot[name path,only marks,black,mark=*] table[x ={h}, y={eP15DSL}]{\ConvStrongLambdaOne};
        \addplot[mark=none, black,forget plot] table[x={h}, y={create col/linear regression={y=eP15DSL}}]{\ConvStrongLambdaOne};
        \xdef\slopePlatenDSL{\pgfplotstableregressiona}
        \addlegendentry{Platen 1.5 DSL, $p=\pgfmathprintnumber{\slopePlatenDSL}$}


        \addplot[name path,only marks,blue,mark=diamond*] table[x ={h}, y={eE}]{\ConvStrongLambdaOne};
        \addplot[mark=none, blue,forget plot, dashed] table[x={h}, y={create col/linear regression={y=eE}}]{\ConvStrongLambdaOne};
        \xdef\slopeE{\pgfplotstableregressiona}
        \addlegendentry{EM, $p=\pgfmathprintnumber{\slopeE}$}

        \addplot[name path,only marks,red,mark=diamond*] table[x ={h}, y={eP}]{\ConvStrongLambdaOne};
        \addplot[mark=none, red,forget plot,dashed] table[x={h}, y={create col/linear regression={y=eP}}]{\ConvStrongLambdaOne};
        \xdef\slopeP{\pgfplotstableregressiona}
        \addlegendentry{Platen, $p=\pgfmathprintnumber{\slopeP}$}

        \addplot[name path,only marks,green,mark=diamond*] table[x ={h}, y={eM}]{\ConvStrongLambdaOne};
        \addplot[mark=none, green,forget plot,dashed] table[x={h}, y={create col/linear regression={y=eM}}]{\ConvStrongLambdaOne};
        \xdef\slopeM{\pgfplotstableregressiona}
        \addlegendentry{Midpoint, $p=\pgfmathprintnumber{\slopeM}$}

        \addplot[name path,only marks,black,mark=diamond*] table[x ={h}, y={eP15}]{\ConvStrongLambdaOne};
        \addplot[mark=none, black,forget plot,dashed] table[x={h}, y={create col/linear regression={y=eP15}}]{\ConvStrongLambdaOne};
        \xdef\slopePlaten{\pgfplotstableregressiona}
        \addlegendentry{Platen 1.5, $p=\pgfmathprintnumber{\slopePlaten}$}
    \end{axis}
\end{tikzpicture}
    \caption{{Strong error vs.\ steps size} {for $\lambda=1$}}\label{fig:convergence}
\end{subfigure}
\vspace{10pt}
\begin{subfigure}{\textwidth}
\begin{tikzpicture}
    \begin{axis}[
        name=plot1var2,
        width=0.9\textwidth,
        height=0.2\textheight,
        axis x line=bottom,
        xmode = log,
        log basis x=2,
        axis y line=left,
        ymode = log,
        log basis y=2,
        xlabel=$h$,
        ylabel={$\hat{\E}\|Y_N - X(T_N)\|_2$},
        domain=0:1,
        legend style={at={(0.5,1.5)},anchor=north},
        legend columns=4]
        \addplot[name path,only marks,blue,mark=*] table[x ={h}, y={eEDSL}]{\ConvStrongLambdaFive};
        \addplot[mark=none, blue,forget plot] table[x={h}, y={create col/linear regression={y=eEDSL}}]{\ConvStrongLambdaFive};
        \xdef\slopeEDSL{\pgfplotstableregressiona}
        \addlegendentry{EM DSL, $p=\pgfmathprintnumber{\slopeEDSL}$}

        \addplot[name path,only marks,red,mark=*] table[x ={h}, y={ePDSL}]{\ConvStrongLambdaFive};
        \addplot[mark=none, red,forget plot] table[x={h}, y={create col/linear regression={y=ePDSL}}]{\ConvStrongLambdaFive};
        \xdef\slopePDSL{\pgfplotstableregressiona}
        \addlegendentry{Platen DSL, $p=\pgfmathprintnumber{\slopePDSL}$}

        \addplot[name path,only marks,green,mark=*] table[x ={h}, y={eMFSL}]{\ConvStrongLambdaFive};
        \addplot[mark=none, green,forget plot] table[x={h}, y={create col/linear regression={y=eMFSL}}]{\ConvStrongLambdaFive};
        \xdef\slopeMFSL{\pgfplotstableregressiona}
        \addlegendentry{Midpoint FSL, $p=\pgfmathprintnumber{\slopeMFSL}$}

        \addplot[name path,only marks,black,mark=*] table[x ={h}, y={eP15DSL}]{\ConvStrongLambdaFive};
        \addplot[mark=none, black,forget plot] table[x={h}, y={create col/linear regression={y=eP15DSL}}]{\ConvStrongLambdaFive};
        \xdef\slopePlatenDSL{\pgfplotstableregressiona}
        \addlegendentry{Platen 1.5 DSL, $p=\pgfmathprintnumber{\slopePlatenDSL}$}


        \addplot[name path,only marks,blue,mark=diamond*] table[x ={h}, y={eE}]{\ConvStrongLambdaFive};
        \addplot[mark=none, blue,forget plot, dashed] table[x={h}, y={create col/linear regression={y=eE}}]{\ConvStrongLambdaFive};
        \xdef\slopeE{\pgfplotstableregressiona}
        \addlegendentry{EM, $p=\pgfmathprintnumber{\slopeE}$}

        \addplot[name path,only marks,red,mark=diamond*] table[x ={h}, y={eP}]{\ConvStrongLambdaFive};
        \addplot[mark=none, red,forget plot,dashed] table[x={h}, y={create col/linear regression={y=eP}}]{\ConvStrongLambdaFive};
        \xdef\slopeP{\pgfplotstableregressiona}
        \addlegendentry{Platen, $p=\pgfmathprintnumber{\slopeP}$}

        \addplot[name path,only marks,green,mark=diamond*] table[x ={h}, y={eM}]{\ConvStrongLambdaFive};
        \addplot[mark=none, green,forget plot,dashed] table[x={h}, y={create col/linear regression={y=eM}}]{\ConvStrongLambdaFive};
        \xdef\slopeM{\pgfplotstableregressiona}
        \addlegendentry{Midpoint, $p=\pgfmathprintnumber{\slopeM}$}

        \addplot[name path,only marks,black,mark=diamond*] table[x ={h}, y={eP15}]{\ConvStrongLambdaFive};
        \addplot[mark=none, black,forget plot,dashed] table[x={h}, y={create col/linear regression={y=eP15}}]{\ConvStrongLambdaFive};
        \xdef\slopePlaten{\pgfplotstableregressiona}
        \addlegendentry{Platen 1.5, $p=\pgfmathprintnumber{\slopePlaten}$}
    \end{axis}
\end{tikzpicture}
    \caption{{Strong error vs.\ steps size for $\lambda=5$}}\label{fig:convergence5}
\end{subfigure}
\vspace{10pt}
\begin{subfigure}{\textwidth}
\begin{tikzpicture}
    \newenvironment{customlegend}[1][]{%
        \begingroup
        \csname pgfplots@init@cleared@structures\endcsname
        \pgfplotsset{#1}%
    }{%
        \csname pgfplots@createlegend\endcsname
        \endgroup
    }%
\def\addlegendimage{\csname pgfplots@addlegendimage\endcsname}

\begin{customlegend}[legend columns=4,legend style={align=left,draw=none,column sep=2ex},legend entries={EM DSL, Platen DSL, Midpoint FSL, Platen $1.5$ DSL,EM, Platen, Midpoint, Platen $1.5$}]
        \addlegendimage{blue,mark=*}
        \addlegendimage{red,mark=*}
        \addlegendimage{green,mark=*}
        \addlegendimage{black,mark=*}
        \addlegendimage{blue,dashed,mark=diamond*}
        \addlegendimage{red,dashed,mark=diamond*}
        \addlegendimage{green,dashed,mark=diamond*}
        \addlegendimage{black,dashed,mark=diamond*}
\end{customlegend}
\end{tikzpicture}
\end{subfigure}

\begin{subfigure}{0.45\textwidth}
\xdef\basis{2}
\begin{tikzpicture}
    \begin{axis}[
        name=plot1,
        width=0.9\textwidth,
        height=0.29\textheight,
        axis x line=bottom,
        xmode = log,
        log basis x=\basis,
        axis y line=left,
        ymode = log,
        ymax = 1,
        ymin= 0.001,
        log basis y=\basis,
        title={},
        ylabel={Wall-clock time $[s]$},
        xlabel={${\hat{\E}}\|Y_N - X(T_N)\|_2$},
        legend pos=north east]
        \addplot[name path,blue,mark=*] table[x ={eEDSL}, y={tEDSL}]{\TimingsStrongLambdaOne};

        \addplot[name path,red,mark=*] table[x ={ePDSL}, y={tPDSL}]{\TimingsStrongLambdaOne};

        \addplot[name path,green,mark=*] table[x ={eMFSL}, y={tMFSL}]{\TimingsStrongLambdaOne};

        \addplot[name path,black,mark=*] table[x ={eP15DSL}, y={tP15DSL}]{\TimingsStrongLambdaOne};


        \addplot[name path,dashed,blue,mark=diamond*] table[x ={eE}, y={tE}]{\TimingsStrongLambdaOne};
        \addplot[name path,dashed,red,mark=diamond*] table[x ={eP}, y={tP}]{\TimingsStrongLambdaOne};
        \addplot[name path,dashed,green,mark=diamond*] table[x ={eM}, y={tM}]{\TimingsStrongLambdaOne};
        \addplot[name path,dashed,black,mark=diamond*] table[x ={eP15}, y={tP15}]{\TimingsStrongLambdaOne};
    \end{axis}
\end{tikzpicture}
    \caption{{Wall-clock time per batch of 25 paths vs.\ accuracy for $\lambda=1$}}\label{fig:efficiency1}
\end{subfigure}\hspace*{\fill}
\begin{subfigure}{0.45\textwidth}
\xdef\basis{2}
\begin{tikzpicture}
    \begin{axis}[
        name=plot1,
        width=0.9\textwidth,
        height=0.29\textheight,
        axis x line=bottom,
        xmode = log,
        log basis x=\basis,
        axis y line=left,
        ymode = log,
       ymax = 1,
       ymin= 0.001,
        log basis y=\basis,
        title={},
        ylabel={Wall-clock time $[s]$},
        xlabel={${\hat{\E}}\|Y_N - X(T_N)\|_2$},
        legend pos=north east]
        \addplot[name path,blue,mark=*] table[x ={eEDSL}, y={tEDSL}]{\TimingsStrongLambdaFive};

        \addplot[name path,red,mark=*] table[x ={ePDSL}, y={tPDSL}]{\TimingsStrongLambdaFive};

        \addplot[name path,green,mark=*] table[x ={eMFSL}, y={tMFSL}]{\TimingsStrongLambdaFive};

        \addplot[name path,black,mark=*] table[x ={eP15DSL}, y={tP15DSL}]{\TimingsStrongLambdaFive};


        \addplot[name path,dashed,blue,mark=diamond*] table[x ={eE}, y={tE}]{\TimingsStrongLambdaFive};

        \addplot[name path,dashed,red,mark=diamond*] table[x ={eP}, y={tP}]{\TimingsStrongLambdaFive};

        \addplot[name path,dashed,green,mark=diamond*] table[x ={eM}, y={tM}]{\TimingsStrongLambdaFive};

        \addplot[name path,dashed,black,mark=diamond*] table[x ={eP15}, y={tP15}]{\TimingsStrongLambdaFive};

    \end{axis}
\end{tikzpicture}
    \caption{{Wall-clock time per batch of 25 paths vs.\ accuracy for $\lambda=5$}}\label{fig:efficiency2}
\end{subfigure}\hspace*{\fill}

\caption{{Strong convergence results for SDE \cref{eq:testSDEstrongconvergence}}}
\end{figure}
In \cref{fig:convergence} we see that {for $\lambda=1$} the Euler--Maruyama DSL scheme has an approximate strong order $p=0.5${,} the Platen DSL and Midpoint FSL schemes have an approximate strong order $p=1.0${, and the Platen $1.5$ DSL scheme has an approximate strong order of $1.5$} as expected. {Of the two first order schemes, the} Midpoint FSL scheme has a smaller error than the Platen DSL, which is also expected as this scheme, {in addition to the drift term,} also incorporates the diffusion term into the exponential operator.

{In \cref{fig:convergence5} we observe that for $\lambda=5$ the errors of the SL schemes are
considerably smaller than the ones of their underlying methods. The order is as expected for all
methods {except the} EM method. {For the latter} the order 1 error term dominates the order 0.5 term for
the given step sizes, and {we observe that} the error more or less completely overlaps with the one
of Platen's method. We still notice that the error of EM DSL is smaller than the one of EM,
although the measured order is smaller. For step sizes less than $2^{-18}$, we {have observed} that the error difference between EM and EM DSL is insignificant, and both methods exhibit order 0.5.}

{In \cref{fig:efficiency1,fig:efficiency2} we depict the computational effort, measured as wall-clock time per batch of 25 paths, vs.\ the strong error {averaged over all batches}.
Between the two Euler-Maruyama methods {and the two Midpoint methods, respectively,} {a difference in efficiency can only be seen for $\lambda =5$, while} the DSL Platen methods are more efficient than their counterparts for both values of $\lambda$. The increased efficiency of the DSL-methods is more pronounced for $\lambda=5$.
{For larger step sizes and especially $\lambda=5$, similar to the above discussion it can be observed that the error of both the Euler-Maruyama and the Midpoint method is dominated by higher order terms.}
}
\subsubsection{Weak convergence}
{We simulate 2400 batches of each 15000 paths of SDE \eqref{eq:testSDEstrongconvergence} and compare the results to a numerical solution obtained by the {Platen 1.5 DSL} scheme with step size $h=2^{-14}$ as reference solution.}

{In \cref{fig:Weakconvergence,fig:Weakconvergence5}, we report for step sizes $h\in\{2^{-9},2^{-8},2^{-7}, 2^{-6}, 2^{-5}\}$
the error when approximating $\E(X_1^2)$ for $\lambda=1$ and $\lambda=5$ . The $95\%$-confidence intervals have been calculated and span in all cases less than {$6\%$} of the corresponding error value, except for the Platen $2$ DSL, where they span $10\%$, $14\%$, $33\%$, $68\%$, and $122\%$ for $\lambda=1$ and
$2\%$, $6\%$, $25\%$, $98\%$, and $359\%$ for $\lambda=5$
for step-sizes $h=2^{-5}$, $2^{-6}$, $2^{-7}$, $2^{-8}$, and $2^{-9}$ respectively, as well as the {EM} DSL scheme in the case of $\lambda=5$, where the confidence intervals span up to 27\% of the corresponding error value.}

\pgfplotstableread[col sep=comma]{Data/Convergence/WeakApprox_OuterIter=6_Miter=400_Mbatch=15000_Nh=14_E_EDSL_P_PDSL_Pw2_Pw2DSL_1_red.csv}\ConvWeak
\pgfplotstableread[col sep=comma]{Data/Convergence/WeakApprox_OuterIter=6_Miter=400_Mbatch=15000_Nh=14_E_EDSL_P_PDSL_Pw2_Pw2DSL_5_red.csv}\ConvWeakFive
\pgfplotstableread[col sep=comma]{Data/Convergence/WeakApprox_OuterIter=6_Miter=400_Mbatch=15000_Nh=14_E_EDSL_P_PDSL_Pw2_Pw2DSL_1.csv}\Timings
\pgfplotstableread[col sep=comma]{Data/Convergence/WeakApprox_OuterIter=6_Miter=400_Mbatch=15000_Nh=14_E_EDSL_P_PDSL_Pw2_Pw2DSL_5.csv}\TimingsFive

\begin{figure}[htp!]

\begin{subfigure}{\textwidth}
\begin{tikzpicture}
    \begin{axis}[
        name=plot2,
        anchor=above north east,
        width=0.9\textwidth,
        height=0.2\textheight,
        axis x line=bottom,
        xmode = log,
        log basis x=2,
        axis y line=left,
        ymode = log,
        log basis y=2,
        xlabel=$h$,
        ylabel={$| {\hat{\E}}(Y^2_{1,N}) - {\hat{\E}}( X_1(T_N)^2)|$},
        domain=0:1,
        legend style={at={(0.5,1.3)},anchor=north},
        legend columns=3]
        \pgfplotstablecreatecol[create col/expr={abs(\thisrow{eEDSL})}]{abseEDSL}\ConvWeak
        \addplot[name path,only marks,blue, mark=*] table[x ={h}, y={abseEDSL}]{\ConvWeak};
        \addplot[mark=none, blue,forget plot] table[x={h}, y={create col/linear regression={y=abseEDSL}}]{\ConvWeak};
        \xdef\slopeEDSL{\pgfplotstableregressiona}
        \addlegendentry{EM DSL, $\tilde{p}=\pgfmathprintnumber{\slopeEDSL}$}

        \pgfplotstablecreatecol[create col/expr={abs(\thisrow{ePDSL})}]{absePDSL}\ConvWeak
        \addplot[name path,only marks,red, mark=*] table[x ={h}, y={absePDSL}]{\ConvWeak};
        \addplot[mark=none, red,forget plot] table[x={h}, y={create col/linear regression={y=absePDSL}}]{\ConvWeak};
        \xdef\slopePDSL{\pgfplotstableregressiona}
        \addlegendentry{Platen DSL, $\tilde{p}=\pgfmathprintnumber{\slopePDSL}$}

        \pgfplotstablecreatecol[create col/expr={abs(\thisrow{ePw2DSL})}]{absePw2DSL}\ConvWeak
        \addplot[name path,only marks,brown, mark=*] table[x ={h}, y={absePw2DSL}]{\ConvWeak};
        \addplot[mark=none, brown,forget plot] table[x={h}, y={create col/linear regression={y=absePw2DSL}}]{\ConvWeak};
        \xdef\slopePwDSL{\pgfplotstableregressiona}
        \addlegendentry{Platen 2 DSL, $\tilde{p}=\pgfmathprintnumber{\slopePwDSL}$}

        \pgfplotstablecreatecol[create col/expr={abs(\thisrow{eE})}]{abseE}\ConvWeak
        \addplot[name path,only marks,blue,mark=diamond*] table[x ={h}, y={abseE}]{\ConvWeak};
        \addplot[mark=none, blue,forget plot,dashed] table[x={h}, y={create col/linear regression={y=abseE}}]{\ConvWeak};
        \xdef\slopeE{\pgfplotstableregressiona}
        \addlegendentry{EM, $\tilde{p}=\pgfmathprintnumber{\slopeE}$}

        \pgfplotstablecreatecol[create col/expr={abs(\thisrow{eP})}]{abseP}\ConvWeak
        \addplot[name path,only marks,red,mark=diamond*] table[x ={h}, y={abseP}]{\ConvWeak};
        \addplot[mark=none, red,forget plot,dashed] table[x={h}, y={create col/linear regression={y=abseP}}]{\ConvWeak};
        \xdef\slopeP{\pgfplotstableregressiona}
        \addlegendentry{Platen, $\tilde{p}=\pgfmathprintnumber{\slopeP}$}

        \pgfplotstablecreatecol[create col/expr={abs(\thisrow{ePw2})}]{absePw2}\ConvWeak
        \addplot[name path,only marks,brown,mark=diamond*] table[x ={h}, y={absePw2}]{\ConvWeak};
        \addplot[mark=none, brown,forget plot,dashed] table[x={h}, y={create col/linear regression={y=absePw2}}]{\ConvWeak};
        \xdef\slopePw{\pgfplotstableregressiona}
        \addlegendentry{Platen 2, $\tilde{p}=\pgfmathprintnumber{\slopePw}$}

    \end{axis}
\end{tikzpicture}
    \caption{{Weak convergence result for $\lambda=1$: error of approximating $\E X_1^2$ vs.\ step size}}\label{fig:Weakconvergence}
\end{subfigure}
\vspace{10pt}
\begin{subfigure}{\textwidth}
\begin{tikzpicture}
    \begin{axis}[
        name=plot2,
        anchor=above north east,
        width=0.9\textwidth,
        height=0.2\textheight,
        axis x line=bottom,
        xmode = log,
        log basis x=2,
        axis y line=left,
        ymode = log,
        log basis y=2,
        xlabel=$h$,
        ylabel={$| \hat{\E}(Y^2_{1,N}) - \hat{\E}(X_1(T_N)^2)|$},
        domain=0:1,
        legend style={at={(0.5,1.3)},anchor=north},
        legend columns=3]
        \pgfplotstablecreatecol[create col/expr={abs(\thisrow{eEDSL})}]{abseEDSL}\ConvWeakFive
        \addplot[name path,only marks,blue, mark=*] table[x ={h}, y={abseEDSL}]{\ConvWeakFive};
        \addplot[mark=none, blue,forget plot] table[x={h}, y={create col/linear regression={y=abseEDSL}}]{\ConvWeakFive};
        \xdef\slopeEDSL{\pgfplotstableregressiona}
        \addlegendentry{EM DSL, $\tilde{p}=\pgfmathprintnumber{\slopeEDSL}$}

        \pgfplotstablecreatecol[create col/expr={abs(\thisrow{ePDSL})}]{absePDSL}\ConvWeakFive
        \addplot[name path,only marks,red, mark=*] table[x ={h}, y={absePDSL}]{\ConvWeakFive};
        \addplot[mark=none, red,forget plot] table[x={h}, y={create col/linear regression={y=absePDSL}}]{\ConvWeakFive};
        \xdef\slopePDSL{\pgfplotstableregressiona}
        \addlegendentry{Platen DSL, $\tilde{p}=\pgfmathprintnumber{\slopePDSL}$}

        \pgfplotstablecreatecol[create col/expr={abs(\thisrow{ePw2DSL})}]{absePw2DSL}\ConvWeakFive
        \addplot[name path,only marks,brown, mark=*] table[x ={h}, y={absePw2DSL}]{\ConvWeakFive};
        \addplot[mark=none, brown,forget plot] table[x={h}, y={create col/linear regression={y=absePw2DSL}}]{\ConvWeakFive};
        \xdef\slopePwDSL{\pgfplotstableregressiona}
        \addlegendentry{Platen 2 DSL, $\tilde{p}=\pgfmathprintnumber{\slopePwDSL}$}

        \pgfplotstablecreatecol[create col/expr={abs(\thisrow{eE})}]{abseE}\ConvWeakFive
        \addplot[name path,only marks,blue,mark=diamond*] table[x ={h}, y={abseE}]{\ConvWeakFive};
        \addplot[mark=none, blue,forget plot,dashed] table[x={h}, y={create col/linear regression={y=abseE}}]{\ConvWeakFive};
        \xdef\slopeE{\pgfplotstableregressiona}
        \addlegendentry{EM, $\tilde{p}=\pgfmathprintnumber{\slopeE}$}

        \pgfplotstablecreatecol[create col/expr={abs(\thisrow{eP})}]{abseP}\ConvWeakFive
        \addplot[name path,only marks,red,mark=diamond*] table[x ={h}, y={abseP}]{\ConvWeakFive};
        \addplot[mark=none, red,forget plot, dashed] table[x={h}, y={create col/linear regression={y=abseP}}]{\ConvWeakFive};
        \xdef\slopeP{\pgfplotstableregressiona}
        \addlegendentry{Platen, $\tilde{p}=\pgfmathprintnumber{\slopeP}$}

        \pgfplotstablecreatecol[create col/expr={abs(\thisrow{ePw2})}]{absePw2}\ConvWeakFive
        \addplot[name path,only marks,brown,mark=diamond*] table[x ={h}, y={absePw2}]{\ConvWeakFive};
        \addplot[mark=none, brown,forget plot,dashed] table[x={h}, y={create col/linear regression={y=absePw2}}]{\ConvWeakFive};
        \xdef\slopePw{\pgfplotstableregressiona}
        \addlegendentry{Platen 2, $\tilde{p}=\pgfmathprintnumber{\slopePw}$}

    \end{axis}
\end{tikzpicture}
    \caption{{Weak convergence result for $\lambda=5$: error of approximating $\E X_1^2$  vs.\ step size}}\label{fig:Weakconvergence5}
\end{subfigure}
\vspace{10pt}
\begin{subfigure}{\textwidth}
\begin{center}
\begin{tikzpicture}
    \newenvironment{customlegend}[1][]{%
        \begingroup
        \csname pgfplots@init@cleared@structures\endcsname
        \pgfplotsset{#1}%
    }{%
        \csname pgfplots@createlegend\endcsname
        \endgroup
    }%
\def\addlegendimage{\csname pgfplots@addlegendimage\endcsname}

\begin{customlegend}[legend columns=3,legend style={align=left,draw=none,column sep=2ex},legend entries={EM DSL, Platen DSL, Platen $2$ DSL,EM, Platen, Platen $2$}]
        \addlegendimage{blue,mark=*}
        \addlegendimage{red,mark=*}
        \addlegendimage{brown,mark=*}
        \addlegendimage{blue,dashed,mark=diamond*}
        \addlegendimage{red,dashed,mark=diamond*}
        \addlegendimage{brown,dashed,mark=diamond*}
\end{customlegend}
\end{tikzpicture}
\end{center}
\end{subfigure}
\begin{subfigure}{0.45\textwidth}
\xdef\basis{2}
\begin{tikzpicture}
    \begin{axis}[
        name=plot1,
        width=0.9\textwidth,
        height=0.29\textheight,
        axis x line=bottom,
        xmode = log,
        log basis x=\basis,
        axis y line=left,
        ymode = log,
        log basis y=\basis,
        title={},
        ylabel={Wall-clock time $[s]$},
        xlabel={$|\hat{\E}(Y^2_{1,N}) - \hat{\E}(X_1(T_N)^2)|$},
        legend pos=north east]

        \pgfplotstablecreatecol[create col/expr={abs(\thisrow{eEDSL})}]{abseEDSL}\Timings
        \addplot[name path,blue,mark=*] table[x ={abseEDSL}, y={tEDSL}]{\Timings};

        \pgfplotstablecreatecol[create col/expr={abs(\thisrow{ePDSL})}]{absePDSL}\Timings
        \addplot[name path,red,mark=*] table[x ={absePDSL}, y={tPDSL}]{\Timings};

        \pgfplotstablecreatecol[create col/expr={abs(\thisrow{ePw2DSL})}]{absePw2DSL}\Timings
        \addplot[name path,brown,mark=*] table[x ={absePw2DSL}, y={tPw2DSL}]{\Timings};

        \pgfplotstablecreatecol[create col/expr={abs(\thisrow{eE})}]{abseE}\Timings
        \addplot[name path,dashed,blue,mark=diamond*] table[x ={abseE}, y={tE}]{\Timings};

        \pgfplotstablecreatecol[create col/expr={abs(\thisrow{eP})}]{abseP}\Timings
        \addplot[name path,dashed,red,mark=diamond*] table[x ={abseP}, y={tP}]{\Timings};

        \pgfplotstablecreatecol[create col/expr={abs(\thisrow{ePw2})}]{absePw2}\Timings
        \addplot[name path,dashed,brown,mark=diamond*] table[x ={absePw2}, y={tPw2}]{\Timings};
    \end{axis}
\end{tikzpicture}
    \caption{{Wall-clock time per batch of 15000 paths vs.\ accuracy for $\lambda=1$}}\label{fig:weakefficiency1}
\end{subfigure}\hspace*{\fill}
\begin{subfigure}{0.45\textwidth}
\xdef\basis{2}
\begin{tikzpicture}
    \begin{axis}[
        name=plot1,
        width=0.9\textwidth,
        height=0.29\textheight,
        axis x line=bottom,
        xmode = log,
        log basis x=\basis,
        xmax = 0.001,
        axis y line=left,
        ymode = log,
        log basis y=\basis,
        title={},
        ylabel={Wall-clock time $[s]$},
        xlabel={$|\hat{\E}(Y^2_{1,N}) - \hat{\E} (X_1(T_N)^2)|$},
        legend pos=north east]

        \pgfplotstablecreatecol[create col/expr={abs(\thisrow{eEDSL})}]{abseEDSL}\TimingsFive
        \addplot[name path,blue,mark=*] table[x ={abseEDSL}, y={tEDSL}]{\TimingsFive};

        \pgfplotstablecreatecol[create col/expr={abs(\thisrow{ePDSL})}]{absePDSL}\TimingsFive
        \addplot[name path,red,mark=*] table[x ={absePDSL}, y={tPDSL}]{\TimingsFive};

        \pgfplotstablecreatecol[create col/expr={abs(\thisrow{ePw2DSL})}]{absePw2DSL}\TimingsFive
        \addplot[name path,brown,mark=*] table[x ={absePw2DSL}, y={tPw2DSL}]{\TimingsFive};

        \pgfplotstablecreatecol[create col/expr={abs(\thisrow{eE})}]{abseE}\TimingsFive
        \addplot[name path,dashed,blue,mark=diamond*] table[x ={abseE}, y={tE}]{\TimingsFive};

        \pgfplotstablecreatecol[create col/expr={abs(\thisrow{eP})}]{abseP}\TimingsFive
        \addplot[name path,dashed,red,mark=diamond*] table[x ={abseP}, y={tP}]{\TimingsFive};

        \pgfplotstablecreatecol[create col/expr={abs(\thisrow{ePw2})}]{absePw2}\TimingsFive
        \addplot[name path,dashed,brown,mark=diamond*] table[x ={absePw2}, y={tPw2}]{\TimingsFive};

    \end{axis}
\end{tikzpicture}
    \caption{{Wall-clock time per batch of 15000 paths vs.\ accuracy for $\lambda=5$}}\label{fig:weakefficiency5}
\end{subfigure}\hspace*{\fill}
\caption{{Weak convergence results for SDE \cref{eq:testSDEstrongconvergence}}}
\end{figure}
{We see that in this example, the {EM} DSL {scheme }and the Platen DSL scheme deliver very
  similar results. The same holds for the underlying schemes. For both the EM, Platen and Platen 2
  schemes, the error of the DSL versions is much smaller than the one of the underlying scheme. All
  schemes show clearly the expected order of convergence, except that for $\lambda=5$ and the
  considered step sizes, the Platen DSL scheme shows a somewhat smaller numerical order, and the
  Platen 2 DSL scheme exhibits one order higher than expected, caused by the order three dominating the order two error term.

In \cref{fig:weakefficiency1,fig:weakefficiency5} we depict the wall-clock time per batch of 15000 paths vs.\ the weak error {averaged over all batches}. Also here, the results for the EM DSL {scheme } and {the }Platen DSL {scheme} are very similar, and the same {holds} for the underlying schemes. The DSL schemes are in all cases more efficient than their underlying methods.
}

For an application to highly oscillatory differential equations formulated as Stratonovich SDEs, we refer to \cite{debrabantXXlsf}. We will in the following discuss linear stability for the Euler--Maruyama DSL and Platen DSL schemes.

\section{Linear mean-square stability analysis}\label{sec:MSstab}
In this section, we calculate the mean-square stability regions of the Euler--Maruyama DSL and Platen DSL schemes based on the standard linear test equation \cite{higham00msa}. Moreover we consider a higher-dimensional test-equation suggested by Buckwar and Kelly \cite{buckwar10tas,buckwar11acl,buckwar12nds}. We assume that for all the methods considered, $\Delta W^n_m$ is chosen as exact Wiener increment, $\Delta W^n_m=W_m(t_{n+1})-W_m(t_{n})$.

\subsection{Linear stability for multiplicative noise}
To analyze linear stability in the case of multiplicative noise we consider the test equation
\begin{equation}\label{eq:testlinstabmultnoise}
X(t)=X_0+\int_{0}^t(\lambda+\sigma) X(s)\ds+\int_{t_0}^t\mu X(s)\dW(s),
\end{equation}
with $X_0$ independent from $W$, $\lambda,\mu,\sigma\in\C$ and exact solution
\begin{equation}
X(t)=X_0e^{(\lambda+\sigma-\frac{\mu^2}2)t+\mu W(t)},
\end{equation}
which implies
\begin{align*}
\E(|X(t)|^2)=e^{(2\Re(\lambda+\sigma)+|\mu|^2)t}\E(|X_0|^2).
\end{align*}
Thus, the solution of test equation \eqref{eq:testlinstabmultnoise} is mean square stable,
\begin{equation*}
\lim_{t\to\infty}\E(|X(t)|^2)=0,
\end{equation*}
if and only if
\begin{equation}\label{eq:Equationstable}
2\Re(\lambda+\sigma)+|\mu|^2<0,
\end{equation}
see, e.\,g., \cite{saito96sao,higham00msa}.

\subsubsection{Analysis for Euler--Maruyama DSL method}
Application of the  Euler--Maruyama DSL method {with $A_0 = \lambda$ (and $A_1 = 0$, $g_0(t,x) = \sigma x$), $g_1(t,x) = \mu x$)} to \eqref{eq:testlinstabmultnoise} yields
\begin{equation*}
  Y_{n+1}=e^{\lambda h}(1+h\sigma+\mu\Delta W^n)Y_n,
\end{equation*}
and therefore
\begin{align*}
\E\left(|Y_{n+1}|^2\right)&=e^{2h\Re\lambda}\left(|1+h\sigma|^2+h|\mu|^2\right)\E\left(|Y_{n}|^2\right)\\
&=R(h\Re\lambda,h\sigma,\sqrt{h}|\mu|)\E\left(|Y_{n}|^2\right)
\end{align*}
with stability function
\[
R:~\R\times\C\times\R_+\to\R_+:~R(z,u,v)=e^{2z}(|1+u|^2+v^2),
\]
and a four{-}dimensional mean-square stability domain
\begin{equation}\label{eq:StabDomain}
S=\{(z,u,v)\in\R\times\C\times\R_+:~R(z,u,v)\leq1\},
\end{equation}
which for an A{-}stable method would be a superset of the mean-square stability domain of the exact solution,
\[
S^\star=\{(z,u,v)\in\R\times\C\times\R_+:~2z+2\Re u+v^2<0\}.
\]
For $(z,u,v)\in S^\star$ and $|u| \leq -{\sqrt{2}}z$ (implying $ z\leq 0$) it follows that
\[
|1+u|^2+v^2<1+|u|^2-2z\leq e^{-2z},
\]
and thus $(z,u,v)\in S$. So, we can conclude that if the SDE has a mean-square stable solution (i.\,e.\ fulfills \eqref{eq:Equationstable}) and fulfills
\begin{equation}\label{eq:suffcondexpeulermaruyamastable}
|\sigma|\leq -{\sqrt{2}}\Re(\lambda),
\end{equation}
then the Euler--Maruyama DSL scheme is mean-square stable independent of $h$.

Note also that the domain of mean-square stability of the scheme is, as the one of the exact solution, not dependent on $\Im\lambda$, in contrast to the situation for the conventional Euler--Maruyama method, where the stability condition would read
\[
|1+h(\sigma+\lambda)|^2+h|\mu|^2<1.
\]

\subsubsection{Analysis for Platen DSL scheme}
Application of the Platen DSL scheme to \eqref{eq:testlinstabmultnoise} yields
\begin{align*}
  H_2&=(1+h\sigma+\sqrt{h}\mu)Y_n,\\
  Y_{n+1}&=e^{\lambda h}\left(Y_n+h\sigma Y_n+\mu\Delta W^nY_n+\frac{\mu(H_2-Y_n)}{\sqrt{h}}\frac{(\Delta W^n)^2-h}2\right)
\end{align*}
and therefore
\begin{align*}
\E\left(|Y_{n+1}|^2\right)&=e^{2h\Re\lambda}\left(|1+h\sigma|^2+h|\mu|^2+\frac{h|\mu|^2}2|\sigma h+\mu\sqrt{h}|^2\right)\E\left(|Y_{n}|^2\right)\\
&=R(h\Re\lambda,h\sigma,\sqrt{h}\mu)\E\left(|Y_{n}|^2\right),
\end{align*}
with stability function
\[
R:~\R\times\C\times\C\to\R_+:~R(z,u,v)=e^{2z}\big(|1+u|^2+|v|^2(1+\frac12|u+v|^2)\big)
\]
and now a five{-}dimensional mean-square stability domain
\begin{equation}
S=\{(z,u,v)\in\R\times\C\times\C:~R(z,u,v)\leq1\}.
\end{equation}
So, in contrast to the Euler--Maruyama DSL, now the mean-square stability of the method depends also on the argument of $\mu$, though still being independent of $\Im\lambda$ (in contrast to the conventional Platen scheme).

For $(z,u,v)\in \tilde{S}^\star$, where $\tilde{S}^\star$ is the canonical embedding of ${S}^\star$ into $\R\times\C^2$,
$|u|^2+\frac{|v|^4}2\leq2z^2$, $\Re(u\bar{v})\leq 0$ and $|v|^2|u|^2\leq-\frac83z^3$ it follows that
\begin{align*}
|1+u|^2+|v|^2(1+\frac12|u+v|^{{2}})&<1+|u|^2+\frac12|v|^2\underbrace{|u+v|^2}_{(|u|^2+|v|^2+2\Re(u\bar{v}))}-2z\\
&{=1-2z+\underbrace{|u|^2+\frac12|v|^4}_{\leq 2z^2}+\underbrace{\frac12|v|^2|u|^2}_{\leq{-}\frac43z^3}+|v|^2\underbrace{\Re(u\bar{v})}_{\leq0}}\\
&\leq e^{-2z},
\end{align*}
and thus $(z,u,v)\in S$. So, we can conclude that if the SDE has a mean-square stable solution (i.\,e.\ fulfills \eqref{eq:Equationstable}) and fulfills
\begin{equation}\label{eq:suffcondexPlatenstable}
|\sigma|^2+\frac{|\mu|^4}2\leq2\left(\Re(\lambda)\right)^2,~\Re(\sigma\bar{\mu})\leq 0,\text{ and }|\mu|^2|\sigma|^2\leq-\frac83\left(\Re(\lambda)\right)^3,
\end{equation}
then the Platen DSL scheme is mean-square stable independent of $h$.

Overall we have however to note that the conditions \eqref{eq:suffcondexpeulermaruyamastable} and \eqref{eq:suffcondexPlatenstable} are by no means necessary but only sufficient.

\subsection{Linear system stability for multiplicative noise}
{To study the linear system stability, we will follow the ideas outlined by
  Buckwar, Kelly and Sickenberger \cite{buckwar12asa,buckwar12nds,buckwar10tas}. In summary:  Given the linear
SDE}
\begin{equation}\label{eq:stabSystems}
\dX(t)=A_0X(t)\dt + \sum_{m=1}^M B_m X(t)\dW_m
\end{equation}
with $A_0,B_1,\dots,B_M\in\R^{d,d}$.
{Let $P(t)=\E \textrm{vec}(X(t)X(t)^{\top})$
    be the expectation of the  vectorization
  \footnote{{The vectorization of a $d\times d$ matrix $A=\{a_{i,j}\}_{i,j=1}^d$ is the $d^2$-dimensional
  vector given by $\textrm{vec}(A)=(a_{1,1},a_{2,1},\dots,a_{d,d-1},a_{d,d})^\top$.}}
    of the matrix process
  $X(t)X(t)^{\top}$. Then $P(t)$ is given by the solution of the $d^2$-dimensional linear ODE
  \[ dP(t) = SP(t)dt \]
where the}  mean-square stability matrix $S$ for this system is given by \cite{buckwar12asa}
\begin{equation}\label{eq:stabMatrix}
S = \Id \otimes A_0 + A_0 \otimes \Id + \sum_{m=1}^M B_m\otimes B_m,
\end{equation}
where $\otimes$ is the Kronecker product and $\Id$ denotes the $d$-dimensional unit matrix.
{
The zero solution $X(t)=0$ is  asymptotically mean-square stable if and only if all the eigenvalues of
$S$ have a negative real part, \cite[Lemma 3.3]{buckwar12asa}.}
{
Similarly, let $Y_{n}$ be the numerical solution obtained by applying a one-step method to \cref{eq:stabSystems}, such that
\[ Y_{n+1} = \Bd_n Y_n,\]
and let
$Q_n= \E \textrm{vec}(Y_n Y_n^{\top})$ be the expectation of the vectorization of the matrix process
$Y_{n}Y_{n}^T$. Then
\[ Q_{n+1} = \Smet Q_n, \qquad \text{with}\qquad  \Smet = \E \Bd_n\otimes\Bd_n,  \]
and the method is asymptotically mean-square stable if and only $\rho(\Smet)<1$
\cite[Lemma 3.4]{buckwar12asa}.}

Applying the Euler--Maruyama DSL scheme to \cref{eq:stabSystems}, then a single step is given by
\begin{equation*}\label{eq:Eonestepstab1}
Y_{n+1} = e^{A_0 h} \Ad_n Y_n, \qquad \Ad_n = \Id + \sum_{m=1}^M B_m \Delta W^n_m.
\end{equation*}
Let $\bar{A}=A_0h$ and $\bar{B}_m = B_m \sqrt{h}$, then {it follows from the above that }the stability matrix is given by
\begin{equation}\label{equ:stabilityMdEuler}
\Smet = e^{\bar{A}} \otimes e^{\bar{A}} \left( (\Id \otimes \Id) + \sum_{m=1}^M(\bar{B}_m\otimes \bar{B}_m)\right).
\end{equation}
Similarly, applying the Platen DSL scheme to \cref{eq:stabSystems} with $M=1$, we can write the one-step method as
\begin{align*}
Y_{n+1} &= e^{A_0 h} \Ad_n Y_n, \mbox{ with}\\
\Ad_n &= \Id + B_1 \Delta W_1^n + \frac{1}{2\sqrt{h}}\left(e^{-A_0h} B_1 e^{A_0h} (\Id + B_1 \sqrt{h})-B_1\right) \left( (\Delta W^n_m)^2 - h\right).
\end{align*}
Using the same definitions for $\bar{A}$ and $\bar{B}$ as above the stability matrix {of the
scheme} becomes
\begin{multline*}
\Smet = e^{\bar{A}} \otimes e^{\bar{A}} \Bigg( (\Id \otimes \Id) + (\bar{B}\otimes \bar{B})\ifnotarxiv\\\fi
         + \left(e^{-\bar{A}} \bar{B} e^{\bar{A}}(\Id + \bar{B})- \bar{B}\right) \otimes \left(e^{-\bar{A}} \bar{B} e^{\bar{A}}(\Id + \bar{B})- \bar{B}\right)\Bigg),
\end{multline*}
or with $\bar{C} :=e^{-\bar{A}} \bar{B} e^{\bar{A}}(\Id + \bar{B})- \bar{B}$
\begin{equation}\label{equ:stabilityMd}
	\Smet = e^{\bar{A}} \otimes e^{\bar{A}} \left( (\Id \otimes \Id) + (\bar{B}\otimes \bar{B}) + (\bar{C}\otimes \bar{C})\right).
\end{equation}
The methods are asymptotically stable if $\rho(\Smet)<1$. Note that if $\bar{A}$ and $\bar{B}$ commute, then $\bar{C}$ reduces to $\bar{C}=\bar{B}^2$, which corresponds to the term expected for the classical Platen scheme, whose stability matrix is
\begin{equation}\label{equ:stabilityImplPlaten}
	\Smet =  (\Id + \bar{A}) \otimes (\Id + \bar{A}) + (\bar{B}\otimes \bar{B}) + (\bar{B}^2\otimes \bar{B}^2).
\end{equation}
So in contrast to the relation between the stability regions of the standard Euler--Maruyama and the Euler--Maruyama DSL schemes, the commutator $[\bar{A},\bar{B}]$ influences the relation between the standard Platen and the Platen DSL schemes.

We will now calculate and plot the stability regions for some examples and also show some corresponding SDE simulations. We will compare three schemes:
\begin{itemize}
\item the Euler--Maruyama DSL scheme (\enquote{EM DSL}) \cref{equ:driftEuler},
\item the Platen DSL scheme (\enquote{Platen DSL}) \cref{equ:driftPlaten},
\item the implicit Platen strong order 1.0 scheme (\enquote{Impl.\ Platen}) \cite[Chapter 12.3.1]{kloeden99nso}.
\end{itemize}
\subsubsection{Linear test equation with orthogonal noise} \label{sec:BuckwarKelly}
We consider the linear test equation with {noise orthogonal to the flow $\left(X_1(t),X_2(t)\right)^{\top}$} \cite[Equ. 9]{buckwar12nds}
\begin{equation} \label{equ:LinDriftOrtNoiseNCommute}
\dmath \begin{pmatrix} X_1(t) \\ X_2(t)\end{pmatrix} =\underbrace{\begin{pmatrix} \lambda & b \\0 &\lambda \end{pmatrix}}_{=A_0} \begin{pmatrix} X_1(t) \\ X_2(t)\end{pmatrix} \dt + \underbrace{\begin{pmatrix} 0 & \sigma \\-\sigma &0 \end{pmatrix}}_{=B_1} \begin{pmatrix} X_1(t) \\ X_2(t)\end{pmatrix} \dW{(t)}
\end{equation}
{where $\lambda,b,\sigma\in\R$.}
With $\bar{A}=hA_0$, $\bar{B}=\sqrt{h}B_1$ and $\bar{C}$ as defined above we can now calculate the stability matrices {$\Smet(\lambda h,b h, \sigma\sqrt{h})$ according to}
\cref{equ:stabilityMdEuler,equ:stabilityMd} for the two Lawson schemes and {according to} \cref{equ:stabilityImplPlaten} for the implicit Platen scheme. {The mean square asymptotic stability region of the schemes is in each case given by $\{(\lambda h,bh,\sigma\sqrt{h})\in\R^3:~\rho(\Smet(\lambda h,bh,\sigma\sqrt{h}))<1\}.$}
{In }\namecref{fig:stabReg}~\hyperlink{fig:stabRegHyp}{\labelcref*{fig:stabReg}}{, we depict for various choices of $bh$ four slices of these stability regions} {and the corresponding stability region of the exact solution}.

\pgfplotstableread[col sep=comma]{Data/BuckwarKelly/Region/LinearExactB0.csv}\comEx
\pgfplotstableread[col sep=comma]{Data/BuckwarKelly/Region/LinearEulerB0.csv}\comE
\pgfplotstableread[col sep=comma]{Data/BuckwarKelly/Region/LinearPlatenB0.csv}\comLa
\pgfplotstableread[col sep=comma]{Data/BuckwarKelly/Region/LinearImplB0.csv}\comIm

\pgfplotstableread[col sep=comma]{Data/BuckwarKelly/Region/LinearExactB1.csv}\NcomEx
\pgfplotstableread[col sep=comma]{Data/BuckwarKelly/Region/LinearEulerB1.csv}\NcomE
\pgfplotstableread[col sep=comma]{Data/BuckwarKelly/Region/LinearPlatenB1.csv}\NcomLa
\pgfplotstableread[col sep=comma]{Data/BuckwarKelly/Region/LinearImplB1.csv}\NcomIm

\pgfplotstableread[col sep=comma]{Data/BuckwarKelly/Region/LinearExactB2.csv}\NNcomEx
\pgfplotstableread[col sep=comma]{Data/BuckwarKelly/Region/LinearEulerB2.csv}\NNcomE
\pgfplotstableread[col sep=comma]{Data/BuckwarKelly/Region/LinearPlatenB2.csv}\NNcomLa
\pgfplotstableread[col sep=comma]{Data/BuckwarKelly/Region/LinearImplB2.csv}\NNcomIm

\pgfplotstableread[col sep=comma]{Data/BuckwarKelly/Region/LinearExactB5.csv}\NNNcomEx
\pgfplotstableread[col sep=comma]{Data/BuckwarKelly/Region/LinearEulerB5.csv}\NNNcomE
\pgfplotstableread[col sep=comma]{Data/BuckwarKelly/Region/LinearPlatenB5.csv}\NNNcomLa
\pgfplotstableread[col sep=comma]{Data/BuckwarKelly/Region/LinearImplB5.csv}\NNNcomIm

\hypertarget{fig:stabRegHyp}{}
\begin{figure}[ht!]
\begin{tikzpicture}
    \newenvironment{customlegend}[1][]{%
        \begingroup
        \csname pgfplots@init@cleared@structures\endcsname
        \pgfplotsset{#1}%
    }{%
        \csname pgfplots@createlegend\endcsname
        \endgroup
    }%
\def\addlegendimage{\csname pgfplots@addlegendimage\endcsname}

\begin{customlegend}[legend columns=4,legend style={align=left,draw=none,column sep=2ex},legend entries={Impl. Platen, Platen DSL, EM DSL, Exact solution}]
        \addlegendimage{blue}
        \addlegendimage{red}
        \addlegendimage{green!50!black}
        \addlegendimage{black}
\end{customlegend}
\end{tikzpicture}

\begin{tikzpicture}
\begin{axis}[%
    name=plot1,
    width=0.45\textwidth,
    height=0.15\textheight,
    xmin = -3,
    ymax = 4,
    axis x line=bottom,
    axis y line=left,
    title={$b=0$},
    xlabel=$\lambda {h}$,
    ylabel={$\sigma^2 {h}$},
    legend pos=north west]

    \addplot+[name path=f1,mark=none,blue] table[x ={x1}, y expr={(\thisrow{x2})^2}]{\comIm};

    \addplot+[name path=f2,mark=none,red] table[x= {x1}, y expr={(\thisrow{x2})^2}]{\comLa};

    \addplot+[name path=f4,mark=none,green!50!black] table[x= {x1}, y expr={(\thisrow{x2})^2}]{\comE};

    \addplot+[name path=f3, mark=none, color=black] table[x={x1}, y expr={\thisrow{x2}^2}]{\comEx};

    \path[name path=axis] (axis cs:-3,0) -- (axis cs:0,0);
    \path[name path=axis2](0,0) -- (-3,0) -- (-3,5);

    \addplot [
        thick,
        color=green,
        fill=green,
        fill opacity=0.10
    ]
    fill between[
        of=f4 and axis2,
    ];

       \addplot [
        thick,
        color=red,
        fill=red,
        fill opacity=0.15
    ]
    fill between[
        of=f2 and axis2,
    ];

       \addplot [
        thick,
        color=gray,
        fill=gray,
        fill opacity=0.15
    ]
    fill between[
        of=f3 and axis2,
    ];

    \addplot [
        thick,
        color=blue,
        fill=blue,
        fill opacity=0.2
    ]
    fill between[
        of=f1 and axis,
        soft clip={domain=-3:0},
    ];

\end{axis}

\begin{axis}[%
    name=plot2,
    at=(plot1.right of south east), anchor=left of south west,
    width=0.45\textwidth,
    height=0.15\textheight,
    xmin = -3,
    ymax = 4,
    axis x line=bottom,
    axis y line=left,
    title={$bh=1$},
    xlabel=$\lambda h$,
    ylabel={$\sigma^2 h$},
    legend pos=north west]

    \addplot+[name path=f1,mark=none,blue] table[x={x1}, y expr={\thisrow{x2}^2}]{\NcomIm};

    \addplot+[name path=f2,mark=none,red] table[x={x1}, y expr={\thisrow{x2}^2}]{\NcomLa};

    \addplot+[name path=f4,mark=none,green!50!black] table[x= {x1}, y expr={(\thisrow{x2})^2}]{\NcomE};

    \addplot+[name path=f3, mark=none, color=black] table[x={x1}, y expr={\thisrow{x2}^2}]{\NcomEx};

    \addplot[mark=*] coordinates {(-2.0,2.5)};
    \addplot[mark=*] coordinates {(-1,2.5)};

    \path[name path=axis] (axis cs:-3,0) -- (axis cs:0,0);
    \path[name path=axis2](0,0) -- (-3,0) -- (-3,5);

    \addplot [
        thick,
        color=green,
        fill=green,
        fill opacity=0.10
    ]
    fill between[
        of=f4 and axis2,
    ];

       \addplot [
        thick,
        color=red,
        fill=red,
        fill opacity=0.15
    ]
    fill between[
        of=f2 and axis2,
    ];

       \addplot [
        thick,
        color=gray,
        fill=gray,
        fill opacity=0.15
    ]
    fill between[
        of=f3 and axis2,
    ];

    \addplot [
        thick,
        color=blue,
        fill=blue,
        fill opacity=0.2
    ]
    fill between[
        of=f1 and axis,
        soft clip={domain=-3:0},
    ];
\end{axis}

\begin{axis}[%
    name=plot3,
    at=(plot1.below south east), anchor=above north east,
    width=0.45\textwidth,
    height=0.15\textheight,
    xmin = -3,
    ymax = 4,
    axis x line=bottom,
    axis y line=left,
    title={$bh=2$},
    xlabel=$\lambda h$,
    ylabel={$\sigma^2 h$},
    legend pos=north west]

    \addplot+[name path=f1,mark=none,blue] table[x ={x1}, y expr={(\thisrow{x2})^2}]{\NNcomIm};
    \addplot+[name path=f2,mark=none,red] table[x= {x1}, y expr={(\thisrow{x2})^2}]{\NNcomLa};

    \addplot+[name path=f4,mark=none,green!50!black] table[x= {x1}, y expr={(\thisrow{x2})^2}]{\NNcomE};
    
    \addplot+[name path=f3, mark=none, color=black] table[x={x1}, y expr={\thisrow{x2}^2}]{\NNcomEx};
    
    \path[name path=axis] (axis cs:-3,0) -- (axis cs:0,0);
    \path[name path=axis2](0,0) -- (-3,0) -- (-3,5);

    \addplot [
        thick,
        color=green,
        fill=green,
        fill opacity=0.10
    ]
    fill between[
        of=f4 and axis2,
    ];

       \addplot [
        thick,
        color=red,
        fill=red,
        fill opacity=0.15
    ]
    fill between[
        of=f2 and axis2,
    ];

       \addplot [
        thick,
        color=gray,
        fill=gray,
        fill opacity=0.15
    ]
    fill between[
        of=f3 and axis2,
    ];

    \addplot [
        thick,
        color=blue,
        fill=blue,
        fill opacity=0.2
    ]
    fill between[
        of=f1 and axis,
        soft clip={domain=-3:0},
    ];

\end{axis}

\begin{axis}[%
    name=plot4,
    at=(plot3.right of south east), anchor=left of south west,
    width=0.45\textwidth,
    height=0.15\textheight,
    xmin = -3,
    ymax = 4,
    axis x line=bottom,
    axis y line=left,
    title={$bh=5$},
    xlabel=$\lambda h$,
    ylabel={$\sigma^2 h$},
    legend pos=north east]

    \addplot+[name path=f1,mark=none, blue] table[x={x1}, y expr={\thisrow{x2}^2}]{\NNNcomIm};
    
    \addplot+[name path=f2,mark=none, red] table[x={x1}, y expr={\thisrow{x2}^2}]{\NNNcomLa};
    
    \addplot+[name path=f4,mark=none,green!50!black] table[x= {x1}, y expr={(\thisrow{x2})^2}]{\NNNcomE};
    
    \addplot+[name path=f3, mark=none, color=black] table[x={x1}, y expr={\thisrow{x2}^2}]{\NNNcomEx};
    
    \path[name path=axis] (axis cs:-3,0) -- (axis cs:0,0);
    \path[name path=axis2](0,0) -- (-3,0) -- (-3,5);

    \addplot [
        thick,
        color=green,
        fill=green,
        fill opacity=0.10
    ]
    fill between[
        of=f4 and axis2,
    ];

       \addplot [
        thick,
        color=red,
        fill=red,
        fill opacity=0.15
    ]
    fill between[
        of=f2 and axis2,
    ];

       \addplot [
        thick,
        color=gray,
        fill=gray,
        fill opacity=0.15
    ]
    fill between[
        of=f3 and axis2,
    ];

    \addplot [
        thick,
        color=blue,
        fill=blue,
        fill opacity=0.2
    ]
    fill between[
        of=f1 and axis,
        soft clip={domain=-3:0},
    ];

\end{axis}
\end{tikzpicture}
    \caption{Stability regions of the Euler--Maruyama DSL, Platen DSL and implicit Platen scheme applied to the linear test equation. The coloured areas show the direction where $\rho(\Smet)<1$. The two marks mark the values for which we do the numerical simulation.\label{fig:stabReg}}
\end{figure}
In \namecref{fig:stabReg}~\hyperlink{fig:stabRegHyp}{\labelcref*{fig:stabReg}} we see that the Euler--Maruyama DSL scheme {has the best stability properties} for the presented choices of $b{h}$, $\lambda{h}$ and ${\sigma^2}{h}$, but in some cases might over-stabilise, in particular for small values of $b{h}$. {Among} the three schemes, the Platen DSL scheme {reproduces} the stability region of the exact solution the best, especially for smaller values of $b{h}$. The implicit Platen {scheme} shows in all cases a lacking satisfactory stabilising effect in ${\sigma^2}{h}$, which is expected as this scheme is only implicit in the drift.

To demonstrate this effect{,} we now consider the situation that $b = 10$, $\lambda\in\{-20,-10\}$, $\sigma^2=25$ and $h=0.1$ corresponding to the two points in the top right figure of \namecref{fig:stabReg}~\hyperlink{fig:stabRegHyp}{\labelcref*{fig:stabReg}}. From this stability plot, we expect that for $\lambda h= -2.0$ and $\sigma^2h=2.5$ the two Lawson schemes perform better than the implicit Platen scheme, while for $\lambda h= -1.0$ we expect that the Euler--Maruyama DSL scheme over-stabilises and thus converges to $0$, whereas the two Platen schemes diverge. We use the initial value $X_0=(1,1)^T$ and simulate ${10^6}$ paths.

\pgfplotstableread[col sep=comma]{Data/BuckwarKelly/BKOutsideStabReg.csv}\outside
\pgfplotstableread[col sep=comma]{Data/BuckwarKelly/BKInsideStabReg.csv}\inside

\pgfplotstableread[col sep=comma]{Data/BuckwarKelly/ODE/BKOutsideStabRegRef.csv}\outsideref
\pgfplotstableread[col sep=comma]{Data/BuckwarKelly/ODE/BKInsideStabRegRef.csv}\insideref

\hypertarget{fig:numRes2Hyp}{}
\begin{figure}[ht!]
\begin{center}
\begin{tikzpicture}
    \newenvironment{customlegend}[1][]{%
        \begingroup
        \csname pgfplots@init@cleared@structures\endcsname
        \pgfplotsset{#1}%
    }{%
        \csname pgfplots@createlegend\endcsname
        \endgroup
    }%
\def\addlegendimage{\csname pgfplots@addlegendimage\endcsname}

\begin{customlegend}[legend columns=4,legend style={align=left,draw=none,column sep=2ex},legend entries={Impl. Platen, Platen DSL, EM DSL, Exact solution}]
        \addlegendimage{blue}
        \addlegendimage{red}
        \addlegendimage{green!50!black}
        \addlegendimage{black}
\end{customlegend}
\end{tikzpicture}

Inside stab. reg.: $\lambda h = -2.0$ and $\sigma^2 h = 2.5$.

\begin{tikzpicture}
\begin{axis}[%
    name=plot1,
    width=0.45\textwidth,
    height=0.15\textheight,
    axis x line=bottom,
    axis y line=left,
    ymode = log,
    title={ },
    xlabel=${t_n}$,
    ylabel={${\hat{\E}(Y_{n,1}^2)}$},
    legend pos=north east]

    \addplot+[mark=none] table[x={t}, y={impl1}]{\inside};
    
    \addplot+[mark=none] table[x={t}, y={Platen1}]{\inside};
    
    \addplot+[mark=none,green!50!black] table[x={t}, y={euler1}]{\inside};
    
    \addplot+[mark=none] table[x={t}, y={ref1}]{\insideref};

\end{axis}

\begin{axis}[%
    name=plot2,
    at=(plot1.right of south east), anchor=left of south west,
    width=0.45\textwidth,
    height=0.15\textheight,
    axis x line=bottom,
    axis y line=left,
    ymode = log,
    title={ },
    xlabel=${t_n}$,
    ylabel={${\hat{\E}(Y_{n,2}^2)}$},
    legend pos=north east]

    \addplot+[mark=none] table[x={t}, y={impl2}]{\inside};
    
    \addplot+[mark=none] table[x={t}, y={Platen2}]{\inside};
    
    \addplot+[mark=none,green!50!black] table[x={t}, y={euler2}]{\inside};
    
    \addplot+[mark=none] table[x={t}, y={ref2}]{\insideref};
    
\end{axis}
\end{tikzpicture}

Outside stab. reg.: $\lambda h=-1.0$ and $\sigma^2 h=2.5$.

\begin{tikzpicture}
\begin{axis}[%
    name=plot1,
    width=0.45\textwidth,
    height=0.15\textheight,
    axis x line=bottom,
    axis y line=left,
    ymode = log,
    title={ },
    xlabel=${t_n}$,
    ylabel={${\hat{\E}(Y_{n,1}^2)}$},
    legend pos=north west]

    \addplot+[mark=none] table[x={t}, y={impl1}]{\outside};

    \addplot+[mark=none] table[x={t}, y={Platen1}]{\outside};

    \addplot+[mark=none,green!50!black] table[x={t}, y={euler1}]{\outside};

    \addplot+[mark=none] table[x={t}, y={ref1}]{\outsideref};

\end{axis}

\begin{axis}[%
    name=plot2,
    at=(plot1.right of south east), anchor=left of south west,
    width=0.45\textwidth,
    height=0.15\textheight,
    axis x line=bottom,
    axis y line=left,
    ymode = log,
    title={ },
    xlabel=${t_n}$,
    ylabel={${\hat{\E}(Y_{n,2}^2)}$},
    legend pos=north west]

    \addplot+[mark=none] table[x={t}, y={impl2}]{\outside};
    
    \addplot+[mark=none] table[x={t}, y={Platen2}]{\outside};
    
    \addplot+[mark=none,green!50!black] table[x={t}, y={euler2}]{\outside};
    
    \addplot+[mark=none] table[x={t}, y={ref2}]{\outsideref};
    \end{axis}
\end{tikzpicture}
\end{center}
\caption{Numerical results for \cref{equ:LinDriftOrtNoiseNCommute} on the boundary of the stability region of the exact solution}\label{fig:numRes2}
\end{figure}
In \namecref{fig:numRes2}~\hyperlink{fig:numRes2Hyp}{\labelcref*{fig:numRes2}} we plot the evolution of the second moment of $X_1$ and $X_2$. To calculate the exact moments, we derived the ODE system for ${\E}(X_1^2)$, ${\E}(X_2^2)$ and ${\E}(X_1 X_2)$ using \ito's formula and solved it in Matlab using "ode15s" \cite{shampine97tmo} with an absolute tolerance of ${10^{-14}}$. We see that both the Platen DSL and Euler--Maruyama DSL schemes are indeed stable just inside their stability region, $\lambda h=-2.0$ and $\sigma^2h=2.5$, whereas the implicit Platen scheme fails to be stable as predicted by its stability region. Conversely, we see that when $\lambda h=-1.0$  and $\sigma^2h=2.5$, then the implicit Platen scheme diverges due to the parameters lying outside of its stability region, while the Platen DSL scheme remains close to ${1}$, indicating that {$(\lambda h,\sigma^2h)$ is} near the boundary of {the scheme's} stability region with $\rho(\Smet)$ close to $1$. The Euler--Maruyama DSL scheme converges to $0$, as the chosen parameters are still inside of its stability region.
\subsubsection{Damped and driven {oscillators}}
In the above experiments, we saw that the implicit Platen scheme, in contrast to the Lawson schemes, might fail to stabilise the SDE when the stiffness comes from the diffusion. However, for the damped/driven oscillator, we will see that eigenvalues with large complex parts might make the implicit Platen over-stabilise.

We consider the oscillator
\begin{equation} \label{equ:dampedOsc}
\dmath \begin{pmatrix} X_1(t) \\ X_2(t)\end{pmatrix} =\underbrace{\begin{pmatrix} \lambda & \omega^2 \\-\omega^2 &\lambda \end{pmatrix}}_{=A_0} \begin{pmatrix} X_1(t) \\ X_2(t)\end{pmatrix} \dt + \underbrace{\begin{pmatrix} 0 & \sigma \\-\sigma &0 \end{pmatrix}}_{=B_1} \begin{pmatrix} X_1(t) \\ X_2(t)\end{pmatrix} \dW{(t)}
\end{equation}
{with $\lambda,\omega,\sigma\in\R$. }For this system{,} the matrices needed to compute the stability domains \cref{equ:stabilityMdEuler,equ:stabilityMd,equ:stabilityImplPlaten} are $\bar{A}=hA_0$, $\bar{B}=\sqrt{h}B_1$, and $\bar{C}=-h\sigma^2\Id$.
We note that with $\lambda$ and $\omega$ we can control the size of the real and imaginary part of the eigenvalues respectively of the matrix $A_0$. Now plotting the stability regions, we obtain \cref{fig:OscStab}.

\pgfplotstableread[col sep=comma]{Data/Oscillator/Region/RotationExactOmegaPi.csv}\oscEx
\pgfplotstableread[col sep=comma]{Data/Oscillator/Region/RotationLawsonOmegaPi.csv}\oscLa
\pgfplotstableread[col sep=comma]{Data/Oscillator/Region/RotationImplOmegaPi.csv}\oscIm
\pgfplotstableread[col sep=comma]{Data/Oscillator/Region/RotationEulerOmegaPi.csv}\oscE

\pgfplotstableread[col sep=comma]{Data/Oscillator/Region/RotationExactOmega10Pi.csv}\hoscEx
\pgfplotstableread[col sep=comma]{Data/Oscillator/Region/RotationLawsonOmega10Pi.csv}\hoscLa
\pgfplotstableread[col sep=comma]{Data/Oscillator/Region/RotationImplOmega10Pi.csv}\hoscIm
\pgfplotstableread[col sep=comma]{Data/Oscillator/Region/RotationEulerOmega10Pi.csv}\hoscE

\begin{figure}[ht!]
\begin{tikzpicture}
    \newenvironment{customlegend}[1][]{%
        \begingroup
        \csname pgfplots@init@cleared@structures\endcsname
        \pgfplotsset{#1}%
    }{%
        \csname pgfplots@createlegend\endcsname
        \endgroup
    }%
\def\addlegendimage{\csname pgfplots@addlegendimage\endcsname}

\begin{customlegend}[legend columns=4,legend style={align=left,draw=none,column sep=2ex},legend entries={Impl. Platen, Platen DSL, EM DSL, Exact solution}]
        \addlegendimage{blue}
        \addlegendimage{red}
        \addlegendimage{green!50!black}
        \addlegendimage{black}
\end{customlegend}
\end{tikzpicture}

\begin{tikzpicture}
\begin{axis}[%
    name=plot1,
    width=0.45\textwidth,
    height=0.2\textheight,
    xmin = -1,
    xmax = 0,
    ymax = 3,
    ymin=0,
    xtick={0,-0.25,-0.5,-0.75,-1},
    axis x line=bottom,
    axis y line=left,
    title={$\omega^2h=\pi$},
    xlabel=$\lambda h$,
    ylabel={$\sigma^2 h$},
    legend pos=north east]

    \addplot+[name path=f1,mark=none,blue] table[x={x1}, y expr={\thisrow{x2}^2}]{\oscIm};
    
    \addplot+[name path=f2,mark=none,red] table[x={x1}, y expr={\thisrow{x2}^2}]{\oscLa};
    
    \addplot+[name path=f4,mark=none,green!50!black] table[x={x1}, y expr={\thisrow{x2}^2}]{\oscE};

    \addplot+[name path=f3, mark=none, color=black] table[x={x1}, y expr={\thisrow{x2}^2}]{\oscEx};
    
    \addplot[mark=*] coordinates {(-0.1,0.4)};
    \addplot[mark=*] coordinates {(-0.3,0.4)};

    \path[name path=axis] (axis cs:-3,0) -- (axis cs:0,0);
    \path[name path=axis2](0,0) -- (-1,0) -- (-1,4);

    \addplot [
        thick,
        color=green,
        fill=green,
        fill opacity=0.10
    ]
    fill between[
        of=f4 and axis2,
    ];

       \addplot [
        thick,
        color=red,
        fill=red,
        fill opacity=0.15
    ]
    fill between[
        of=f2 and axis,
        soft clip={domain=-3:0},
    ];

       \addplot [
        thick,
        color=gray,
        fill=gray,
        fill opacity=0.15
    ]
    fill between[
        of=f3 and axis,
        soft clip={domain=0:-3},
    ];

    \addplot [
        thick,
        color=blue,
        fill=blue,
        fill opacity=0.2
    ]
    fill between[
        of=f1 and axis,
        soft clip={domain=-3:0},
    ];
\end{axis}

\begin{axis}[
    name=plot2,
    at=(plot1.right of south east), anchor=left of south west,
    width=0.45\textwidth,
    height=0.2\textheight,
    xmin = -1,
    xmax = 0,
    ymax = 3,
    ymin=0,
    axis x line=bottom,
    axis y line=left,
    title={$\omega^2h=10\pi $},
    xlabel=$\lambda h$,
    ylabel={$\sigma^2 h$},
    legend pos=south west]

    \addplot+[name path=f1,mark=none,blue] table[x={x1}, y expr={\thisrow{x2}^2}]{\hoscIm};

    \addplot+[name path=f2,mark=none,red] table[x={x1}, y expr={\thisrow{x2}^2}]{\hoscLa};
    
    \addplot+[name path=f4,mark=none,green!50!black] table[x={x1}, y expr={\thisrow{x2}^2}]{\hoscE};
    
    \addplot+[name path=f3, mark=none, color=black] table[x={x1}, y expr={\thisrow{x2}^2}]{\hoscEx};
    
    \path[name path=axis] (axis cs:-1,0) -- (axis cs:0,0);
	\path[name path=axis2](0,0) -- (-1,0) -- (-1,4);

    \addplot [
        thick,
        color=green,
        fill=green,
        fill opacity=0.10
    ]
    fill between[
        of=f4 and axis2,
    
    ];

       \addplot [
        thick,
        color=red,
        fill=red,
        fill opacity=0.15
    ]
    fill between[
        of=f2 and axis,
        soft clip={domain=-1:0},
    ];

       \addplot [
        thick,
        color=gray,
        fill=gray,
        fill opacity=0.15
    ]
    fill between[
        of=f3 and axis,
        soft clip={domain=0:-1},
    ];

    \addplot [
        thick,
        color=blue,
        fill=blue,
        fill opacity=0.2
    ]
    fill between[
        of=f1 and axis,
        soft clip={domain=-1:0},
    ];

\end{axis}
\end{tikzpicture}
\caption{Stability regions of the Euler--Maruyama DSL, Platen DSL and implicit Platen scheme applied to the driven/damped oscillator. The coloured areas show the direction where $\rho(\Smet)<1$. The two marks mark for which values we simulate the problem.} \label{fig:OscStab}
\end{figure}

In \cref{fig:OscStab} we see that, as for the scalar test equation \eqref{eq:testlinstabmultnoise}, the stability domains of both Lawson schemes and the exact solution are independent of the imaginary part of the {eigenvalues of $A_0$}, {i.\,e.\ $\omega$}, whereas the one of the implicit Platen scheme depends on it.

To show the consequence, consider the oscillator with $X_0=(1,1)^\top$, $\omega^2=10\pi$, $\sigma^2=4$, $\lambda\in\{-1,-3\}$ and $h=0.1$ corresponding to the two marks in the left part of \cref{fig:OscStab}. For $\lambda h=0.1$ it is a driven oscillator, whereas for $\lambda h=0.3$ it is a damped one.

 We simulate ${\E}(|X_1|^2)$ using the Euler--Maruyama DSL, Platen DSL and implicit Platen schemes with $h=0.1$ and ${10000}$ paths. We also include the exact moment which, again, is obtained by solving the ODE system for ${\E}(X_1^2)$, ${\E}(X_2^2)$ and ${\E}(X_1 X_2)$ in Matlab using "ode15s" with an absolute tolerance of ${10^{-14}}$.

\pgfplotstableread[col sep=comma]{Data/Oscillator/DampedOscillator.csv}\osc
\pgfplotstableread[col sep=comma]{Data/Oscillator/DrivenOscillator.csv}\oscNS
\pgfplotstableread[col sep=comma]{Data/Oscillator/DampedOscillatorRef.csv}\oscref
\pgfplotstableread[col sep=comma]{Data/Oscillator/DrivenOscillatorRef.csv}\oscNSref

\begin{figure}[ht!]
\begin{center}
\begin{tikzpicture}
    \newenvironment{customlegend}[1][]{%
        \begingroup
    
        \csname pgfplots@init@cleared@structures\endcsname
        \pgfplotsset{#1}%
    }{%
        \csname pgfplots@createlegend\endcsname
        \endgroup
    }%
\def\addlegendimage{\csname pgfplots@addlegendimage\endcsname}

\begin{customlegend}[legend columns=4,legend style={align=left,draw=none,column sep=2ex},legend entries={Impl. Platen, Platen DSL, EM DSL, Exact solution}]
        \addlegendimage{blue}
        \addlegendimage{red}
        \addlegendimage{green!50!black}
        \addlegendimage{black}
\end{customlegend}
\end{tikzpicture}

Damped oscillator: $\lambda h=-0.3$ and $\sigma^2 h = 0.4$.

\begin{tikzpicture}
\begin{axis}[%
    name=plot1,
    width=0.45\textwidth,
    height=0.15\textheight,
    axis x line=bottom,
    axis y line=left,
    xmax=2,
    ymode=log,
    title={ },
    xlabel={$t_n$},
    ylabel={${\hat{\E}(Y_{n,1}^2)}$},
    legend pos=north east]

    \addplot+[mark=none,red] table[x={t}, y={Platen1}]{\osc};
    
    \addplot+[mark=none,green!50!black] table[x={t}, y={euler1}]{\osc};
    
    \addplot+[mark=none,black] table[x={t}, y={ref1}]{\oscref};
    
    \addplot+[mark=none,blue] table[x={t}, y={impl1}]{\osc};
    
\end{axis}

\begin{axis}[%
    name=plot2,
    at=(plot1.right of south east), anchor=left of south west,
    width=0.45\textwidth,
    height=0.15\textheight,
    axis x line=bottom,
    axis y line=left,
    xmax=2,
    ymode=log,
    title={ },
    xlabel={$t_n$},
    ylabel={${\hat{\E}(Y_{n,2}^2)}$},
    legend pos=north east]

    \addplot+[mark=none,red] table[x={t}, y={Platen2}]{\osc};
    
    \addplot+[mark=none,green!50!black] table[x={t}, y={euler2}]{\osc};
    
    \addplot+[mark=none,blue] table[x={t}, y={impl2}]{\osc};
    
    \addplot+[mark=none,black] table[x={t}, y={ref2}]{\oscref};
    \end{axis}
\end{tikzpicture}

Driven {o}scillator: $\lambda h=-0.1$ and $\sigma^2 h = 0.4$.
\begin{tikzpicture}
\begin{axis}[%
    name=plot1,
    
    width=0.45\textwidth,
    height=0.15\textheight,
    axis x line=bottom,
    axis y line=left,
    xmax = 2,
    ymode=log,
    title={ },
    xlabel=${t_n}$,
    ylabel={${\hat{\E}(Y_{n,1}^2)}$},
    legend pos=south west]

    \addplot+[mark=none,red] table[x={t}, y={Platen1}]{\oscNS};

    \addplot+[mark=none,green!50!black] table[x={t}, y={euler1}]{\oscNS};
    
    \addplot+[mark=none,blue] table[x={t}, y={impl1}]{\oscNS};
    
    \addplot+[mark=none,black] table[x={t}, y={ref1}]{\oscNSref};
    \end{axis}

\begin{axis}[%
    name=plot2,
    at=(plot1.right of south east), anchor=left of south west,
    width=0.45\textwidth,
    height=0.15\textheight,
    axis x line=bottom,
    axis y line=left,
    xmax = 2,
    ymode = log,
    title={ },
    xlabel={$t_n$},
    ylabel={${\hat{\E}(Y_{n,2}^2)}$},
    legend pos=south west]

    \addplot+[mark=none,red] table[x={t}, y={Platen2}]{\oscNS};

    \addplot+[mark=none,green!50!black] table[x={t}, y={euler2}]{\oscNS};

    \addplot+[mark=none,blue] table[x={t}, y={impl2}]{\oscNS};

    \addplot+[mark=none,black] table[x={t}, y={ref2}]{\oscNSref};
    
\end{axis}

\end{tikzpicture}
\end{center}
\caption{Numerical results for the driven {and} damped {oscillators} using the Euler--Maruyama DSL, the Platen DSL, and the implicit Platen {schemes}.}\label{fig:dampedOsc}
\end{figure}

In \cref{fig:dampedOsc} we see that both for the damped {oscillator} and the driven oscillator, the implicit Platen scheme stabilises the numerical flow significantly more than the Platen DSL scheme. For the damped oscillator, the behaviour of all schemes is correct, and they all converge to $0$. The implicit Platen scheme does, however, converge way too fast compared to the exact solution. The Euler--Maruyama DSL scheme is also converging slightly too fast, but is closer to the exact solution. The Platen DSL scheme follows almost the exact solution.

For the driven oscillator, we see that the implicit Platen scheme shows the wrong behaviour, as if the oscillator would still be damped. The two Lawson schemes behave correctly, with the results of the Platen DSL scheme visually matching the exact solution and the approximations by the Euler--Maruyama DSL scheme being slightly off.

\section{Conclusion}
In this paper, we derived the general class of {SRK} Lawson schemes. We proved that, if the underlying SRK scheme is of mean-square order $p$, then the {SRK} Lawson scheme is of strong order $p$. Similarly, under the assumption that the linear diffusion terms included in the exponential operator are skew-symmetric, we proved that the {SRK} Lawson schemes also inherit the weak order of convergence from the underlying {SRK} scheme.

We performed a linear stability analysis for the Euler--Maruyama and Platen {DSL} schemes. In particular, we numerically demonstrated that the implicit Platen scheme might provide insufficient stabilisation when the destabilisation comes from the diffusion. However, the Euler--Maruyama and Platen {DSL} schemes provide adequate stabilisation, even though the exponential only includes the drift term. Conversely, we demonstrate that for driven oscillators with small diffusion terms, the implicit Platen {scheme} might over-stabilise and make it a damped oscillator, whereas the two {SL} schemes more accurately match the behaviour of the exact solution.

\section{Acknowledgement}
Nicky Cordua Mattsson would like to thank the SDU e-Science centre for partially funding his PhD and the Department of Mathematics at the Norwegian University of Science and Technology for kindly hosting him during his visit. The authors would like to thank two
anonymous reviewers for very detailed reading and the resulting many helpful comments.

\def\cprime{$'$} \providecommand{\de}[2]{#2}

{
\section{Appendix}
\subsection{Platen strong order 1.5 SL scheme} \label{sec:Platen15}
Writing the explicit order $1.5$ strong scheme by Platen for $M=1$ \cite[Eq. 11.2.1]{kloeden99nso} in the form \cref{equ:SRK}, we see that the coefficients $z_i^{m,n}$ of the scheme are given by
\begin{center}
\begin{tabular}{c|ll@{}}
		& $m=0$ 													& $m=1$ \\ \hline \hline
		$i=1$ & $z^{0,n}_1 = \frac{1}{2}h$ 									& $z^{1,n}_1 = \Delta W^n - \frac{1}{h}[\Delta W^n h-\Delta Z^n]$ \\[0.5em]
		$i=2$ & $z^{0,n}_2 =  \frac{1}{2\sqrt{h}}\Delta Z^n + \frac{1}{4}h$ & $z^{1,n}_2 = \frac{1}{h}\left[\frac{1}{2}\Delta W^n h-\Delta Z^n - \frac{1}{4} \{\frac{1}{3}(\Delta W^n)^2 - h\}\Delta W^n\right]$ \\[0.5em]
		$i=3$ & $z^{0,n}_3 =-\frac{1}{2\sqrt{h}}\Delta Z^n + \frac{1}{4}h$	& $z^{1,n}_3 = \frac{1}{h}\left[\frac{1}{2}\Delta W^n h-\Delta Z^n + \frac{1}{4} \{\frac{1}{3}(\Delta W^n)^2 - h\}\Delta W^n\right]$ \\[0.5em]
		$i=4$ & $z^{0,n}_4 = 0$ 												& $z^{1,n}_4 = \frac{1}{4h} \left[\frac{1}{3}(\Delta W^n)^2 - h\right]\Delta W^n$ \\[0.5em]
		$i=5$ & $z^{0,n}_5 = 0$ 												& $z^{1,n}_4 = -\frac{1}{4h} \left[\frac{1}{3}(\Delta W^n)^2 - h\right]\Delta W^n$
	\end{tabular}
\end{center}
{where
\[\Delta Z^n=\frac12h\Delta W^n + \frac{\sqrt{3}}6h^{3/2}U^n
\]
with $U ^n\sim\mathcal{N}(0,1)$.}
Similarly, the coefficients of $Z_{ij}^{m,n}$ are given by
\begin{center}
\begin{tabular}{c|lllll}
& $j=1$ & $j=2$ & $j=3$ & $j=4$ & $j=5$  \\ \hline \hline
	$i=1$&&&&&\\ \hline
	$m=0$ & $Z^{0,n}_{1,1} = 0$, & $Z^{0,n}_{1,2} = 0$, & $Z^{0,n}_{1,3} = 0$, & $Z^{0,n}_{1,4} = 0$, & $Z^{0,n}_{1,5} = 0$ \\[0.5em]
	$m=1$ &$Z^{1,n}_{1,1} = 0$, & $Z^{1,n}_{1,2} = 0$, & $Z^{1,n}_{1,3} = 0$, & $Z^{1,n}_{1,4} = 0$, & $Z^{1,n}_{1,5} = 0$ \\[0.5em]\hline
	$i=2$&&&&&\\ \hline
	$m=0$ &$Z^{0,n}_{2,1} = h$, & $Z^{0,n}_{2,2} = 0$, & $Z^{0,n}_{2,3} = 0$, & $Z^{0,n}_{2,4} = 0$, & $Z^{0,n}_{2,5} = 0$ \\[0.5em]
	$m=1$ &$Z^{1,n}_{2,1} = \sqrt{h}$, & $Z^{1,n}_{2,2} = 0$, & $Z^{1,n}_{2,3} = 0$, & $Z^{1,n}_{2,4} = 0$, & $Z^{1,n}_{2,5} = 0$ \\[0.5em]\hline
	$i=3$&&&&&\\ \hline
	$m=0$ &$Z^{0,n}_{1,1} = h$, & $Z^{0,n}_{1,2} = 0$, & $Z^{0,n}_{1,3} = 0$, & $Z^{0,n}_{1,4} = 0$, & $Z^{0,n}_{1,5} = 0$ \\[0.5em]
	$m=1$ &$Z^{1,n}_{1,1} = -\sqrt{h}$, & $Z^{1,n}_{1,2} = 0$, & $Z^{1,n}_{1,3} = 0$, & $Z^{1,n}_{1,4} = 0$, & $Z^{1,n}_{1,5} = 0$ \\[0.5em] \hline
	$i=4$&&&&&\\ \hline
	$m=0$ &$Z^{0,n}_{2,1} = h$, & $Z^{0,n}_{2,2} = 0$, & $Z^{0,n}_{2,3} = 0$, & $Z^{0,n}_{2,4} = 0$, & $Z^{0,n}_{2,5} = 0$ \\[0.5em]
	$m=1$ &$Z^{1,n}_{2,1} = \sqrt{h}$, & $Z^{1,n}_{2,2} = \sqrt{h}$, & $Z^{1,n}_{2,3} = 0$, & $Z^{1,n}_{2,4} = 0$, & $Z^{1,n}_{2,5} = 0$ \\[0.5em] \hline
	$i=5$&&&&&\\ \hline
	$m=0$ &$Z^{0,n}_{2,1} = h$, & $Z^{0,n}_{2,2} = 0$, & $Z^{0,n}_{2,3} = 0$, & $Z^{0,n}_{2,4} = 0$, & $Z^{0,n}_{2,5} = 0$ \\[0.5em]
	$m=1$ &$Z^{1,n}_{2,1} = \sqrt{h}$, & $Z^{1,n}_{2,2} = -\sqrt{h}$, & $Z^{1,n}_{2,3} = 0$, & $Z^{1,n}_{2,4} = 0$, & $Z^{1,n}_{2,5} = 0$
	\end{tabular}
\end{center}
Using the definitions of the coefficients $c_{m}^{n,i}$ and $\Delta L_{i}^n$ we calculate
\begin{center}
	\begin{tabular}{c|llcl}
		& $m=0$ 		& $m=1$ & & $\Delta L_i^n$ \\ \hline \hline
		$i=1$ & $c^{n,1}_0=0$ 	& $c^{n,1}_1=0$ 			& $\implies$ &$\Delta L_1^n = 0$ \\[0.5em]
		$i=2$ & $c^{n,2}_0=h$ 	& $c^{n,2}_1=\sqrt{h}$ 	& $\implies$ &$\Delta L_2^n = (A_0 -\gs A_1^2)h + A_1 \sqrt{h}$\\[0.5em]
		$i=3$ & $c^{n,3}_0=h$	& $c^{n,3}_1=-\sqrt{h}$ 	& $\implies$ &$\Delta L_3^n = (A_0 -\gs A_1^2)h - A_1 \sqrt{h}$\\[0.5em]
		$i=4$ & $c^{n,4}_0=h$ 	& $c^{n,4}_1=2\sqrt{h}$	& $\implies$ &$\Delta L_4^n = (A_0 -\gs A_1^2)h + 2A_1 \sqrt{h}$ \\[0.5em]
		$i=5$ & $c^{n,5}_0=h$ 	& $c^{n,5}_1=0$			& $\implies$ &$\Delta L_5^n = (A_0 -\gs A_1^2)h$
	\end{tabular}
\end{center}
Finally, using the definitions of $c_{m}^{n}$ and $\Delta L^n$ we calculate
$c^{n}_0=h$, $c^{n}_1=\Delta W^n$ and thus $\Delta L^n = (A_0 -\gs A_1^2)h + A_1\Delta W^n$.

With all the parameters of the scheme \cref{equ:SRKL} in place, the resulting Platen strong order 1.5 SL scheme is given by
\begin{align*}
	H_1 &= \bar{V}^n_n, \\
	H_2 &= \bar{V}^n_n + \gt^n_0 (t_n,H_1) h + \gt^n_1 (t_n,H_1) \sqrt{h}, \\
	H_3 &= \bar{V}^n_n + \gt^n_0 (t_n,H_1) h - \gt^n_1 (t_n,H_1) \sqrt{h}, \\
	H_4 &= \bar{V}^n_n + \gt^n_0 (t_n,H_1) h + \gt^n_1 (t_n,H_1) \sqrt{h} + e^{-\Delta L_2^n}\gt^n_1 (t_n+h,e^{\Delta L_2^n}H_2) \sqrt{h}, \\
 	H_5 &= \bar{V}^n_n + \gt^n_0 (t_n,H_1) h + \gt^n_1 (t_n,H_1) \sqrt{h} - e^{-\Delta L_2^n}\gt^n_1 (t_n+h,e^{\Delta L_2^n}H_2) \sqrt{h},
\end{align*}
\begin{multline*}
 	\bar{V}^n_{n+1}= \bar{V}^n_n + \gt^n_1 (t_n,H_1) \Delta W^n\\ +
 						 \Big[e^{-\Delta L_2^n}\gt^n_0 (t_n+h,e^{\Delta L_2^n}H_2) - e^{-\Delta L_3^n}\gt^n_0 (t_n+h,e^{\Delta L_3^n}H_3)\Big]\frac{\Delta Z^n}{2\sqrt{h}} \\
 						+ \Big[e^{-\Delta L_2^n}\gt^n_0 (t_n+h,e^{\Delta L_2^n}H_2) + 2\gt^n_0 (t_n,H_1) + e^{-\Delta L_3^n}\gt^n_0 (t_n+h,e^{\Delta L_3^n}H_3)\Big]\frac{h}{4}  \\
 						+ \Big[e^{-\Delta L_2^n}\gt^n_1 (t_n+h,e^{\Delta L_2^n}H_2) - e^{-\Delta L_3^n}\gt^n_1 (t_n+h,e^{\Delta L_3^n}H_3)\Big]\frac{(\Delta W^n)^2 - h}{4\sqrt{h}} \\
 						 + \Big[e^{-\Delta L_2^n}\gt^n_1 (t_n+h,e^{\Delta L_2^n}H_2) - 2\gt^n_1 (t_n,H_1) + e^{-\Delta L_3^n}\gt^n_1 (t_n+h,e^{\Delta L_3^n}H_3)\Big]\frac{\Delta W^n h - \Delta Z^n}{2h}\\
 						+\Big[e^{-\Delta L_4^n}\gt^n_1 (t_n+h,e^{\Delta L_4^n}H_4) - e^{-\Delta L_5^n}\gt^n_1 (t_n+h,e^{\Delta L_5^n}H_5) - e^{-\Delta L_2^n}\gt^n_1 (t_n+h,e^{\Delta L_2^n}H_2) \\
 						+ e^{-\Delta L_3^n}\gt^n_1 (t_n+h,e^{\Delta L_3^n}H_3)\Big] \frac{1}{4h}\left[ \frac{1}{3}(\Delta W^n)^2 - h \right] \Delta W^n
\end{multline*}
and
\[
Y_{n+1} = e^{\Delta L^n} \bar{V}^n_{n+1}.
\]
\subsection{Platen weak order 2.0 SL scheme} \label{sec:Platen2}
Following the same steps as above, for the explicit order $2$ weak scheme by Platen for $M=1$ \cite[Eq. 15.1.1]{kloeden99nso} as underlying scheme we obtain
\begin{center}
\begin{tabular}{ll}
	$\Delta L_1^n = 0$  & $\Delta L_2^n = (A_0 - \frac{1}{2} A_1^2) h + A_1 \Delta W^n$ \\[1em]
	$\Delta L_3^n = (A_0 - \frac{1}{2} A_1^2) h + A_1 \sqrt{h}$ & $\Delta L_4^n = (A_0 - \frac{1}{2} A_1^2) h - A_1 \sqrt{h}$ \\[1em]
	$\Delta L^n = (A_0 - \frac{1}{2} A_1^2) h + A_1 \Delta W^n $ &
\end{tabular}
\end{center}

\noindent and thus the corresponding Platen weak order 2.0 SL scheme is given by
\begin{align*}
	H_1 &= \bar{V}^n_n, \\
	H_2 &= \bar{V}^n_n + \gt^n_0 (t_n,H_1) h + \gt^n_1 (t_n,H_1) \Delta W^n, \\
	H_3 &= \bar{V}^n_n + \gt^n_0 (t_n,H_1) h + \gt^n_1 (t_n,H_1) \sqrt{h}, \\
	H_4 &= \bar{V}^n_n + \gt^n_0 (t_n,H_1) h - \gt^n_1 (t_n,H_1) \sqrt{h}, \\
 	\\
 	\bar{V}^n_{n+1} &= \bar{V}^n_n + \Big[e^{-\Delta L_2^n}\gt^n_0 (t_n+h,e^{\Delta L_2^n}H_2) + \gt^n_0 (t_n,H_1)\Big] \frac{h}{2}\\
 						&+ \Big[e^{-\Delta L_3^n}\gt^n_1 (t_n+h,e^{\Delta L_3^n}H_3) + 2\gt^n_1 (t_n,H_1) + e^{-\Delta L_4^n}\gt^n_1 (t_n+h,e^{\Delta L_4^n}H_4)\Big] \frac{\Delta W^n}{4} \\
 						&+ \Big[e^{-\Delta L_3^n}\gt^n_1 (t_n+h,e^{\Delta L_3^n}H_3) - e^{-\Delta L_4^n}\gt^n_1 (t_n+h,e^{\Delta L_4^n}H_4)\Big]\frac{(\Delta W^n)^2 - h}{4\sqrt{h}} 	\\ \\
 	Y_{n+1} &= e^{\Delta L^n} \bar{V}^n_{n+1}.
\end{align*}
}
\end{document}